\def\doctype{}
\newcommand\lam{\lambda}
\newcommand{\comment}[1]{}
\numberwithin{equation}{section}
\let\oldsection\section
\newcommand\boldsection[1]{\oldsection{\bf #1}}
\newcommand\starsection[1]{\oldsection*{\bf #1}}
\renewcommand\section{\@ifstar\starsection\boldsection}
\newtheoremstyle{theorem}
  {12pt}		  
  {0pt}  
  {\sl}  
  {\parindent}     
  {\bf}  
  {. }    
  { }    
  {}     
\theoremstyle{theorem}
\newtheorem{thm}{Theorem}[section]  
\newtheorem{lemma}[thm]{Lemma}     
\newtheorem{cor}[thm]{Corollary}
\newtheorem{prop}[thm]{Proposition}
\newtheoremstyle{definition}
  {12pt}		  
  {0pt}  
  {}  
  {\parindent}     
  {\bf}  
  {. }    
  { }    
  {}     
\theoremstyle{definition}
\newtheorem{ex}[thm]{Example}
\renewcommand{\proofname}{Proof}
\renewenvironment{proof}[1][\proofname]{\par
  \pushQED{\qed}%
  \normalfont \partopsep=\z@skip \topsep=\z@skip
  \trivlist
  \item[\hskip\labelsep
        \scshape
    #1\@addpunct{.}]\ignorespaces
}{%
  \popQED\endtrivlist\@endpefalse
}
\renewcommand*\@maketitle{%
  \normalfont\normalsize
  \@adminfootnotes
  \@mkboth{\@nx\shortauthors}{\@nx\shorttitle}%
  \global\topskip42\p@\relax 
  \@settitle
  \ifx\@empty\authors \else {\vskip 1em
\vtop{\centering\shortauthors\@@par}} \fi
  \ifx\@empty\@date \else {\vskip 1em \vtop{\centering\@date\@@par}}\fi 
  \ifx\@empty\@dedicatory
  \else
    \baselineskip18\p@
    \vtop{\centering{\footnotesize\itshape\@dedicatory\@@par}%
      \global\dimen@i\prevdepth}\prevdepth\dimen@i
  \fi
  \@setabstract
  \normalsize
  \if@titlepage
    \newpage
  \else
    \dimen@34\p@ \advance\dimen@-\baselineskip
    \vskip\dimen@\relax
  \fi
} 
\renewcommand*\@adminfootnotes{%
  \let\@makefnmark\relax  \let\@thefnmark\relax
  \ifx\@empty\@subjclass\else \@footnotetext{\@setsubjclass}\fi
  \ifx\@empty\@keywords\else \@footnotetext{\@setkeywords}\fi
  \ifx\@empty\thankses\else \@footnotetext{%
    \def\par{\let\par\@par}\@setthanks}%
  \fi
\thispagestyle{titlepage}
}
\title{On the Algebraic Combinatorics of Injections \\ and its Applications to Injection Codes}
\author{Peter J.~Dukes}
\address{\rm Peter J.~ Dukes:
Mathematics and Statistics,
University of Victoria, Victoria, BC
}
\email{dukes@uvic.ca}
\author{Ferdinand Ihringer}
\address{\rm Ferdinand Ihringer:
Department of Mathematics: Analysis, Logic and Discrete Mathematics,
Ghent University,
Ghent, Belgium
}
\email{Ferdinand.Ihringer@gmail.com}
\author{Nathan Lindzey}
\address{\rm Nathan Lindzey:
Computer Science,
University of Colorado Boulder,
Boulder, CO
}
\email{Nathan.Lindzey@colorado.edu}
\thanks{Research of P.J.~Dukes is supported by NSERC grant 312595--2017. F. Ihringer is supported by a postdoctoral fellowship of the Research Foundation - Flanders (FWO)}
\date{\today}
\begin{document}

\begin{abstract}
We consider the algebraic combinatorics of the set of injections from a $k$-element set to an $n$-element set. In particular, we give a new combinatorial formula for the spherical functions of the Gelfand pair $(S_k \times S_n, \text{diag}(S_k) \times S_{n-k})$.
We use this combinatorial formula to give new Delsarte linear programming bounds on the size of codes over injections.
\end{abstract}

\maketitle
\hrule


\section{Introduction}

Let $S_n$ denote the symmetric group on $n$ elements and let $S_{k,n}$ denote the set of injections (alternatively, partial permutations) $\sigma$ from $[k] := \{1,2,\cdots, k\}$ to $[n]$. Note that $S_{n,n}=S_n$, so this is a natural generalization of the symmetric group. The goals of this paper are two-fold.  Firstly, we investigate the algebraic combinatorics of injections. In particular, we investigate the \textit{injection scheme}~\cite{LR,M}, an association scheme naturally associated with injections which is a simultaneous generalization of the Johnson association scheme and the group association scheme of $S_n$.  Secondly, we apply this theory to analyze injections in the coding-theoretic sense. That is, we use the character table of the injection scheme to compute, for a wide range of parameters, upper bounds on the number of partial permutations with prescribed minimum Hamming distance (or, in general, allowed distance sets).

\subsection{The Algebraic Combinatorics of Injections}

Throughout this work, let $G_{k,n} := S_k \times S_n$ and $K_{k,n} := \text{diag}(S_k) \times S_{n-k}$. We shall investigate the algebraic combinatorics of $S_{k,n}$ via the Gelfand pair $(G_{k,n},K_{k,n})$. The spherical functions of $(G_{k,n},K_{k,n})$ have combinatorial significance, as they describe the eigenvalues of a natural family of graphs defined over $S_{k,n}$, i.e., the character table of the injection association scheme. We begin with a brief overview of previous work related to the subject.

Diaconis and Shahshahani~\cite{Diaconis} observed that $(G_{k,n},K_{k,n})$ is a Gelfand pair by showing the double coset algebra $\mathbb{C}[K_{k,n} \backslash G_{k,n} / K_{k,n} ]$ is commutative. Later, Greenhalgh~\cite{Greenhalgh} found a closed expression for the spherical functions of $(G_{k,n},K_{k,n})$ evaluated at the double coset $K_{k,n} \backslash (k,k+1) / K_{k,n}$, equivalently, the eigenvalues of the graph over $S_{k,n}$ where $\sigma,\sigma'$ is an edge if their respective mappings agree on all but one symbol of the domain. Using this expression, he showed that the mixing time of the uniform random walk on this graph is approximately $(n-k)\log n + cn$ for some constant $c > 0$.

In quantum computing, the algebraic combinatorics of $S_{k,n}$ has been used to show adversarial lower bounds on the time-complexity of the \textsc{Collision}, \textsc{Set-Equality}, and \textsc{Index-Erasure} problems. These lower bounds are derived from properties of the dual characters and Krein parameters of the injection scheme ~\cite{Ambainis,BR,LR}, and are expressed in terms of the spherical functions of $(G_{k,n},K_{k,n})$.

Greenhalgh~\cite{Greenhalgh} posed the question of investigating the spherical functions of $(G_{k,n},K_{k,n})$, as spherical functions often correspond to interesting families of orthogonal polynomials (e.g., special functions). For the case $k = n-1$, the so-called ``unbalanced" pair $(S_{n-1} \times S_n, \text{diag}(S_{n-1}))$, Strahov~\cite{Strahov} showed that many of the classical results in the theory of symmetric functions have unbalanced analogues. In particular, he gave a Murnaghan-Nakayama type rule and a Jacobi-Trudi identity for evaluating its spherical functions. Note that the ``balanced" pair $(S_{n} \times S_n, \text{diag}(S_{n}))$ recovers the classical representation theory of $S_n$ (see \cite{MacDonald95}).

Such expressions for the cases $2 \leq k \leq n-2$ are not known, and to what extent the classical representation theory of the symmetric group carries over to these cases is an intriguing question. Indeed, the absence of useful combinatorial formulas for the spherical functions of $(G_{k,n},K_{k,n})$ has been a major obstacle in each of the areas above. 

We make some progress in this direction by giving a combinatorial formula for the spherical functions of $(G_{k,n},K_{k,n})$. The formula is significantly more revealing than the known formulas, and it is much easier to compute. It can be used to estimate the eigenvalues and ranks of matrices in the Bose-Mesner algebra of the injection scheme, in special cases, giving exact closed-form expressions (we do not pursue this direction in this paper), and it also allows us to efficiently compute the character tables of injection schemes for explicit parameters $k$ and $n$.

\subsection{Injection Codes}

We now outline the coding-theoretic framework, starting first with the well-studied case of permutations.  The Hamming distance between two permutations $\sigma, \tau \in S_n$ 
is the number of non-fixed points of $\sigma \tau^{-1}$, or, equivalently, the number of disagreements when $\sigma$ and $\tau$ are written as words in single-line notation.  For example, $1234$ and $3241$ are at distance three. This notion naturally generalizes to injections \cite{Dukes}.

A \emph{permutation code} PC$(n,d)$ is a subset $\Gamma$ of $S_n$ such that the distance between 
any two distinct elements of $\Gamma$ is at least $d$.  The language of classical coding theory is often used: elements of $\Gamma$ are \emph{words}, $n$ is the \emph{length} of the code, and the parameter $d$ is the \emph{minimum distance}, although for our purposes it is not important whether distance $d$ is ever achieved.  Permutation codes are also called \emph{permutation arrays} by some authors, where the words are written as rows of an $n \times |\Gamma|$ array.

The investigation of permutation codes essentially began with the articles \cite{DV,FD}.
After some years of inactivity, permutation codes enjoyed a resurgence, due in part to their applications to error-correction over certain channels and in tandem with the ongoing development of computational discrete optimization.

For positive integers $n \ge d$, we let $M(n,d)$ denote the maximum size of a PC$(n,d)$.  It is easy to see that $M(n,1)=M(n,2)=n!$, and that $M(n,n)=n$.  The Singleton bound $M(n,d) \le n!/(d-1)!$ holds.  The alternating group $A_n$ shows that $M(n,3)=n!/2$.  More generally, a sharply $k$-transitive subgroup of $S_n$ furnishes a permutation code of (maximum possible) size $n!/(n-k)!$.  For instance, the Mathieu groups $M_{11}$ and $M_{12}$ are maximum PC$(11,7)$ and PC$(12,7)$, respectively.  On the other hand, determination of $M(n,d)$ absent any algebraic structure appears to be a difficult problem.  A table of bounds on $M(n,d)$ can be found in \cite{SM}. 

A relaxation known as an \emph{injection code} was introduced in \cite{Dukes}.  As the name suggests, one considers the problem of packing, with resepect to Hamming distance, injections (alternatively partial permutations), of a fixed length $k$ using the alphabet $[n]=\{1,\dots,n\}$.  Let $M(n,k,d)$ denote the maximum size of a family of such injections with pairwise Hamming distance at least $d$.  For example, we have $M(n,k,k)=n$ from (for the lower bound) the cyclic shifts of, say, $12\cdots k$ and (for the upper bound) the pigeonhole principle.  The problem of determining $M(n,k,d)$ is, like its counterpart for permutations, challenging in general.  However, as we illustrate in Section~\ref{sec:codes} to follow, there are various closely related problems in combinatorics motivating further study of injection codes.

\subsection{Outline}

The outline of the paper is as follows.
In Section 2, we review integer partitions and tableaux, and generalize the Robinson-Schensted-Knuth correspondence to the setting of injections.  Then, in Section 3, we set up the algebraic combinatorics for injections.  In particular, we introduce the injection scheme, its associated Gelfand pair, and give a formula for its spherical functions in terms of characters of symmetric groups.  The main result of Section 4 pushes this to a purely combinatorial description using a canonical basis for $\mathbb{C}[S_{k,n}]$ in terms of tableaux.  In Section 5, we state the Delsarte linear programming bound for injection codes in terms of characters for the injection scheme. Our computations for parameters $k \le n \le 15$ are reported as tables of new or improved bounds.  Additionally, we offer some motivation for these bounds by noting several combinatorial problems closely connected with injection codes.  We conclude with a few open problems that naturally follow our work.

\section{Tableaux and Injections}

We give a brief overview of some tableau terminology, see~\cite{Sagan} for a more detailed treatment. Let $\lambda = (\lambda_1,\lambda_2,\cdots,\lambda_\ell) \vdash n$ denote an (integer) \emph{partition} of $n \in \mathbb{N}$. Let $\ell(\lambda)$ denote the \emph{length} of $\lambda$, that is, the number of parts in the partition. To each $\lambda \vdash n$, we may associate a \emph{tableau} $t$, a left-justified array of cells with $\ell$ rows and $\lambda_i$ cells in $i$th row. Let $\lambda^\top$ denote the \emph{transpose} partition, that is, the partition obtained by interchanging the rows and columns of $\lambda$'s tableau. For any $\mu = (\mu_1,\mu_2,\cdots,\mu_k) \vdash m$ such that $\mu_i \leq \lambda_i$ for all $1 \leq i \leq k$, the \emph{skew-tableau} $\lambda / \mu$ is the array of cells obtained by removing the cells corresponding to $\mu$ from $\lambda$. A skew-tableau is a \emph{horizontal strip} if no two of its cells lie in the same column.

We say a tableau $t$ of shape $\lambda \vdash n$ is a \emph{Young tableau} if its cells are assigned a unique $i \in [n]$. A Young tableau is \emph{standard} if the cells are in ascending order from left to right in each row, and in ascending order from top to bottom in each column. Let $f^\lambda$ denote the number of standard Young tableau of shape $\lambda \vdash n$. A \emph{tabloid} $\{t\}$ is a Young tableau such that cells in each row are unordered. For any tabloid $\{t\}$ with $n$ cells, let $\text{row}_{\{t\}}(i)$ denote the index of the row of $\{t\}$ that contains $i \in [n]$, and let $\text{col}_{t}(i)$ denote the index of the column of $t$ that contains $i \in [n]$.

A well-known fact is that the symmetric group $S_n$ on $n$ symbols admits the following representation-theoretic count:
\begin{align}\label{eq:rs}
	|S_n| =  \sum_{\lambda \vdash n} \left(f^\lambda \right)^2.
\end{align}
 An elegant combinatorial proof of this fact follows from \emph{Robinson-Schensted Correspondence}, a well-known combinatorial procedure that associates to each permutation $\sigma \in S_n$ a unique pair of standard Young tableaux of the same shape, and vice versa (see~\cite{Sagan}).
 	
Knuth generalized this correspondence to a wider class of combinatorial objects called \emph{generalized permutations}, which are $2 \times m$ arrays of integers
$$\left(\begin{array}{cccc}
i_1 & i_2 & \cdots & i_m\\
j_1 & j_2 & \cdots & j_m\
\end{array}\right) \text{ such that } i_1 \leq \cdots \leq i_m \text{ and if } i_r = i_{r+1}, \text{then } j_r \leq j_{r+1}.$$
\emph{Robinson-Schensted-Knuth Correspondence} (RSK) associates a pair of semistandard Young tableau of the same shape to each generalized permutation, and vice versa (see~\cite{Sagan}).
We may encode an injection $1 \mapsto j_1,2 \mapsto j_2,\cdots,k \mapsto j_k =: (j_1, j_2,\cdots, j_k)$ as a generalized permutation:
$$\left(\begin{array}{cccccccc}
1 & 2 & \cdots & k & k+1 & \cdots & k+1\\
j_1 & j_2 & \dots & j_k & j_{k+1} & \cdots & j_n\\
\end{array}\right),$$
where $j_{k+1}, \cdots, j_n \in [n] \setminus \{j_1,\cdots,j_k\}$ are ordered from least to greatest.  Applying the RSK algorithm to the encoded injections associates to each injection a standard Young tableau $P$ and a semistandard Young tableau $Q$ of the same shape $\lambda \vdash n$. The subtableau of cells labeled $k+1$ in $Q$ form a horizontal strip on $n-k$ cells. Removing this horizontal strip results in a standard Young tableau of shape $\mu \vdash k$ such that $\lambda / \mu$ is a horizontal strip, and so we arrive at the following theorem.
\begin{thm}
RSK gives an explicit bijection between $S_{k,n}$ and pairs $(P,Q)$ where $P$ is a standard Young tableau of shape $\lambda \vdash n$ and $Q$ is a standard Young tableau of shape $\mu \vdash k$ such that $\lambda / \mu$ is a horizontal strip.
\end{thm}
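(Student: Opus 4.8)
The plan is to check that the encoding of injections as generalized permutations, followed by RSK, realizes the claimed correspondence, and then to exhibit an explicit inverse. Since RSK is itself a bijection between generalized permutations and pairs of semistandard Young tableaux of a common shape, the entire task is to identify \emph{which} generalized permutations and \emph{which} tableau pairs occur, and to check that these two restrictions correspond.

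For the forward map, I would first confirm that the encoding produces a legitimate generalized permutation: the top row $1,2,\dots,k,k+1,\dots,k+1$ is weakly increasing, and among the positions with top entry $k+1$ the bottom entries $j_{k+1},\dots,j_n$ are listed in increasing order by construction, so the tie-breaking requirement holds. Because the bottom row is a permutation of $[n]$ (the images together with their increasingly ordered complement), every inserted value is distinct, so the insertion tableau $P$ is a standard Young tableau of some shape $\lambda\vdash n$. The recording tableau $Q$ carries the top-row values, hence each of $1,\dots,k$ exactly once and the value $k+1$ exactly $n-k$ times. By the standard fact that cells with a common value in an RSK recording tableau form a horizontal strip (equal recording labels arise from successive insertions whose inserted entries strictly increase, hence occupy distinct columns), the cells labelled $k+1$ form a horizontal strip; deleting them leaves the distinct entries $1,\dots,k$ in a configuration that is row- and column-increasing, i.e.\ a standard Young tableau $Q'$ of some shape $\mu\vdash k$ with $\lambda/\mu$ a horizontal strip.

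For the inverse, beginning from a pair $(P,Q')$ as in the statement, I would reconstitute a semistandard tableau $\widetilde{Q}$ of shape $\lambda$ by filling every cell of the skew shape $\lambda/\mu$ with the value $k+1$. This $\widetilde{Q}$ is semistandard: rows remain weakly increasing because $k+1$ exceeds every entry of $Q'$ and the new cells are rightmost in their rows, and columns remain strictly increasing because, $\lambda/\mu$ being a horizontal strip, each column acquires at most one new cell, whose entry $k+1$ exceeds every entry above it in that column (each being at most $k$). Applying inverse RSK to $(P,\widetilde{Q})$ returns a generalized permutation whose top row is forced to be $1,\dots,k,k+1,\dots,k+1$ and whose bottom row is a permutation of $[n]$; the first $k$ bottom entries then define an injection $(j_1,\dots,j_k)$. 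That this is a genuine inverse reduces to noting that the decoding conditions are automatic: the bottom entries being distinct, the tie-breaking inequalities among the $k+1$-labelled columns force $j_{k+1}<\cdots<j_n$, so these entries are precisely the complement of $\{j_1,\dots,j_k\}$ in $[n]$ written in increasing order, which is exactly how the encoding fills them.

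The step requiring the most care is this final bijectivity check---verifying that RSK restricted to the encoded generalized permutations lands exactly on the set of pairs $(P,Q')$ described, with none omitted and none covered twice. Everything else is inherited from RSK; in particular the horizontal-strip behaviour of equal recording labels is the single nontrivial ingredient, and it is classical.
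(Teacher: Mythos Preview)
Your proposal is correct and follows essentially the same approach as the paper: encode an injection as a generalized permutation with top row $1,\dots,k,k+1,\dots,k+1$, apply RSK, and observe that the cells labelled $k+1$ in the recording tableau form a horizontal strip whose removal yields a standard Young tableau of shape $\mu\vdash k$. The paper leaves the inverse map implicit (relying on RSK being a bijection), whereas you spell it out and verify that the reconstructed $\widetilde{Q}$ is semistandard and that the decoded two-line array is exactly an encoded injection; this extra care is welcome but not a different route.
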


For example, let $n = 4$ and $k = 2$. There are $4!/2! = 12$ injections from $[2]$ to $[4]$:
$$(1,2),(1,3),(1,4),(2,1),(2,3),(2,4),(3,1),(3,2),(3,4),(4,1),(4,2),(4,3).$$
Their respective unique pairs $(P,Q)$ of standard Young tableau are listed from left to right as follows:
\[
\begin{array}{lll}
\young(1234)~\young(12\times\times) \quad & \young(124,3)~\young(12\times,\times) \quad & \young(123,4)~\young(12\times,\times)\\
\\
\young(134,2)~\young(1\times\times,2) \quad & \young(134,2)~\young(12\times,\times) \quad & \young(13,24)~\young(12,\times\times)\\
\\
\young(124,3)~\young(1\times\times,2) \quad & \young(14,2,3)~\young(1\times,2,\times) \quad & \young(12,34)~\young(12,\times\times)\\
\\
\young(123,4)~\young(12\times,\times) \quad & \young(13,2,4)~\young(1\times,2,\times) \quad & \young(12,3,4)~\young(1\times,2,\times).\\
\end{array}
\]
The theorem above gives a combinatorial proof of a natural generalization of equation~(\ref{eq:rs}).
\begin{cor}\label{cor:count}
	The number of injections from $[k]$ to $[n]$ can be counted as follows: 
	$$|S_{k,n}| = \sum_{\mu,\lambda} f^{\mu} f^{\lambda}$$
	where the sum runs over pairs $\mu \vdash k,\lambda \vdash n$ such that $\lambda / \mu$ is a horizontal strip.
\end{cor}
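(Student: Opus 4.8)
The plan is to count $S_{k,n}$ by way of the explicit bijection supplied by the preceding theorem, organizing the pairs $(P,Q)$ according to their underlying shapes. First I would invoke the theorem to replace $|S_{k,n}|$ by the number of pairs $(P,Q)$ in which $P$ is a standard Young tableau of some shape $\lambda \vdash n$ and $Q$ is a standard Young tableau of some shape $\mu \vdash k$, subject to the single constraint that $\lambda/\mu$ be a horizontal strip. Since this correspondence is a genuine bijection, $|S_{k,n}|$ equals the total number of such admissible pairs, with no injection counted twice and none omitted.

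Next I would sort these pairs by the ordered pair of shapes $(\mu,\lambda)$. For a fixed choice of $\mu \vdash k$ and $\lambda \vdash n$ with $\lambda/\mu$ a horizontal strip, the tableau $P$ may be taken to be any standard Young tableau of shape $\lambda$ and, independently, $Q$ may be any standard Young tableau of shape $\mu$: the condition that $\lambda/\mu$ be a horizontal strip constrains the shapes alone and imposes no relation linking the entries of $P$ to those of $Q$. Hence the number of admissible pairs with this prescribed pair of shapes is exactly $f^\lambda \cdot f^\mu$, directly from the definition of $f^\lambda$ and $f^\mu$ as the counts of standard Young tableaux of the respective shapes. Summing over all admissible shape pairs then yields
$$|S_{k,n}| = \sum_{\mu,\lambda} f^\mu f^\lambda,$$
the sum ranging over $\mu \vdash k$ and $\lambda \vdash n$ with $\lambda/\mu$ a horizontal strip.

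Because the substance of the argument --- producing the bijection and verifying that the shapes interlock as a horizontal strip --- is already carried out in the theorem, there is no genuine obstacle here; the corollary is a bookkeeping consequence. The one point I would take care to state explicitly is the independence of the two tableaux once the shapes are fixed: this is precisely what converts a sum indexed by shape pairs into the product $f^\mu f^\lambda$, and it is exactly the feature that exhibits the identity as a clean generalization of $|S_n| = \sum_{\lambda \vdash n} (f^\lambda)^2$, which is recovered when $k = n$ (where necessarily $\mu = \lambda$ and the horizontal-strip condition is vacuous).
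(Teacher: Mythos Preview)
Your proposal is correct and follows exactly the paper's approach: the corollary is stated immediately after the RSK bijection theorem as a direct counting consequence, and your argument simply makes explicit the bookkeeping (grouping pairs by shape and using the independence of $P$ and $Q$ once the shapes are fixed) that the paper leaves implicit.
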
 
In the next section, we present the corroborating representation theory for the count above. 

\section{Representation Theory and the Injection Scheme}

Throughout this work, we assume a general understanding of group representation theory and the theory of association schemes. We refer the reader to~\cite{Diaconis88} and~\cite{Godsil} for more detailed discussions. 

\subsection{Association Schemes and Finite Gelfand Pairs}
\label{sec:assoc}

Let $X$ be a finite set of cardinality $v$. An \emph{association scheme} is a set of $d+1$ binary $v \times v$ matrices $\mathcal{A} = \{A_0,A_1,\cdots,A_d\}$ over a set $X$ that satisfy the following axioms:
\begin{enumerate}
	\item $A_i \in \mathcal{A} \Rightarrow A_i^\top \in \mathcal{A}$ for all $0 \leq i \leq m$,
	\item $A_0 = I$ where $I$ is the identity matrix,
	\item $\sum_{i=0}^d A_i = J$ where $J$ is the all-ones matrix, and 
	\item $A_iA_j = A_jA_i = \sum_{k = 1}^{d} p_{i,j}(k)A_k$ and $p_{i,j}(k) \in \mathbb{Z}$ for all $0 \leq i,j \leq d$.
\end{enumerate}
Moreover, if $A_i = A_i^\top$ for all $0 \leq i \leq m$, then we say the association scheme is \emph{symmetric}.

The matrices $A_1,\cdots,A_d$ are referred to as \emph{associates}, and the constants $p_{i,j}(k)$ are the so-called \emph{intersection numbers} of the association scheme. Let $v_i$ denote the \emph{valency} (rowsum) of the associate $A_i$, and let $m_i$ denote the \emph{multiplicity} (dimension) of the $i$th eigenspace for all $0 \leq i \leq m$.
The matrix algebra generated by the identity matrix and its associates is the association scheme's \emph{Bose-Mesner algebra}, and these matrices form a basis for this algebra. 
The \emph{character table} of an association scheme is a $(d+1) \times (d+1)$ matrix $P$ whose rows are indexed by eigenspaces, columns indexed by matrices of $\mathcal{A}$, and defined such that $P_{i,j}$ is the eigenvalue for the $i$th eigenspace of $A_j$. It turns out that $P$ is invertible, and so the \emph{dual character table} $Q$ of the association scheme is defined to be $Q = vP^{-1}$. The dual character table of the injection scheme will be central for obtaining linear programming bounds on injection codes.

In general, one can determine the intersection numbers of an association scheme from its character table by appealing to the following well-known relation; see, for instance, Chris Godsil's notes \cite{Godsil}.
\begin{prop}
The intersection numbers satisfy
\[p_{ij}(k) = \frac{1}{vv_k} \sum_{h=0}^d m_h P_{h,i} P_{h,j} P_{h,k}.\]
\end{prop}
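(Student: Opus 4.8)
The plan is to compute the scalar $\mathrm{tr}(A_iA_jA_k^\top)$ in two different ways, matching a combinatorial expansion against a spectral one. The essential tool is the family of primitive idempotents $E_0,\dots,E_d$ of the Bose--Mesner algebra: since the algebra is commutative (axiom 4) and closed under transposition (axiom 1), its generators can be simultaneously diagonalized, and the $E_h$ are the orthogonal projections onto the common eigenspaces. These satisfy $E_hE_{h'}=\delta_{h,h'}E_h$, $\sum_h E_h = I$, $\mathrm{tr}(E_h)=m_h$, and, by the very definition of the character table, the spectral decomposition $A_i=\sum_{h=0}^d P_{h,i}E_h$.

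First I would record the orthogonality of the associates under the trace form. Because the $A_i$ are $0/1$ matrices whose supports partition the entries of $J$ (axiom 3), the entrywise product of $A_k$ and $A_\ell$ vanishes for $k\ne\ell$; counting the $v_k$ ones in each of the $v$ rows of $A_k$ then gives
\[
\mathrm{tr}(A_kA_\ell^\top)=\sum_{x\in X}\#\{z: (A_k)_{xz}=(A_\ell)_{xz}=1\}=v\,v_k\,\delta_{k,\ell}.
\]
Next, expanding $A_iA_j=\sum_{\ell}p_{ij}(\ell)A_\ell$ (axiom 4) and applying this orthogonality isolates a single intersection number:
\[
\mathrm{tr}(A_iA_jA_k^\top)=\sum_\ell p_{ij}(\ell)\,\mathrm{tr}(A_\ell A_k^\top)=v\,v_k\,p_{ij}(k).
\]

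On the other hand, the idempotent relations give $A_iA_j=\sum_h P_{h,i}P_{h,j}E_h$ and $A_k^\top=\sum_h \overline{P_{h,k}}E_h$ (the transpose being the Hermitian adjoint, as the matrices are real), so that $A_iA_jA_k^\top=\sum_h P_{h,i}P_{h,j}\overline{P_{h,k}}E_h$ by orthogonality of the $E_h$; taking traces and using $\mathrm{tr}(E_h)=m_h$ yields
\[
\mathrm{tr}(A_iA_jA_k^\top)=\sum_{h=0}^d m_h\,P_{h,i}P_{h,j}\overline{P_{h,k}}.
\]
Equating the two expressions and dividing by $vv_k$ gives the formula; for a symmetric scheme the eigenvalues $P_{h,k}$ are real and the conjugation disappears, recovering the stated identity.

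The computation is almost entirely formal, so the only real point requiring care is the existence and properties of the $E_h$ --- that is, that commutativity together with closure under transpose forces the Bose--Mesner algebra to be a direct sum of one-dimensional blocks on the common eigenspaces, which is precisely what makes the spectral decomposition $A_i=\sum_h P_{h,i}E_h$ with \emph{scalar} eigenvalues $P_{h,i}$ legitimate in the first place. I would either cite this structure theorem from \cite{Godsil} or simply note that the relevant injection scheme is symmetric, in which case the $A_i$ are commuting real symmetric matrices and their simultaneous diagonalization is immediate.
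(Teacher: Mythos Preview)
Your argument is correct and is precisely the standard trace computation for this identity; the paper does not give its own proof but simply cites the result from Godsil's notes \cite{Godsil}, where the same idempotent-based derivation appears. Nothing further is needed.
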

Conversely, it is possible to find the eigenvalues from the intersection numbers by computing eigenvalues of `random' linear combinations of intersection matrices $B_k=[p_{ij}(k)]$, which can be shown \cite[p. 13]{Delsarte} to furnish another basis for the Bose-Mesner algebra.

For association schemes that arise from groups, the entries of $P$ can be determined via group representation theory, i.e., in terms of the spherical functions of a finite \emph{Gelfand pair}~\cite{BannaiI84}.  
\begin{thm}\cite{MacDonald95}\label{thm:gelfandPair}
	 Let $K \leq G$ be a group.  Then the following are equivalent.
	\begin{enumerate}
		\item $(G,K)$ is a Gelfand Pair;
		\item The induced representation $1 \uparrow_K^G  \cong \bigoplus_{i=1}^d V_i$ (equivalently, the permutation representation of $G$ acting on $G/K$) is multiplicity-free;
		\item The double-coset algebra $\mathbb{C}[K \backslash G /K]$ is commutative.
	\end{enumerate}
	Moreover, a Gelfand pair is symmetric if $KgK = Kg^{-1}K$ for all $g \in G$.
\end{thm}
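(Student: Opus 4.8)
The plan is to prove the equivalence of conditions (2) and (3), since the term ``Gelfand pair'' in (1) is, by definition, one of these two standard properties, so that establishing (2)$\Leftrightarrow$(3) suffices. The whole argument rests on identifying the double-coset algebra with the commutant of the permutation representation and then invoking Schur's lemma together with the Wedderburn structure theorem. First I would realize the double-coset algebra concretely. Set $e = \frac{1}{|K|}\sum_{k \in K} k \in \mathbb{C}[G]$, the idempotent projecting onto the trivial $K$-isotypic component; then $\mathbb{C}[G]e \cong 1\uparrow_K^G$ as a $G$-module, and $\mathbb{C}[K\backslash G/K] \cong e\,\mathbb{C}[G]\,e$ as an algebra, with basis the indicator elements $\mathbf{1}_{KgK} = \sum_{x \in KgK} x$ of the double cosets and multiplication given by convolution.

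The key structural fact, a standard consequence of the idempotent calculus in semisimple algebras, is the algebra isomorphism
\[ e\,\mathbb{C}[G]\,e \;\cong\; \mathrm{End}_G\!\left(\mathbb{C}[G]e\right) \;=\; \mathrm{End}_G\!\left(1\uparrow_K^G\right). \]
Over $\mathbb{C}$ the two sides are in general only anti-isomorphic, but since we are merely deciding commutativity this distinction is immaterial. Next I would apply the decomposition $1\uparrow_K^G \cong \bigoplus_i m_i V_i$ into pairwise non-isomorphic irreducibles $V_i$ with multiplicities $m_i$. By Schur's lemma, $\mathrm{Hom}_G(V_i,V_j)=0$ for $i\neq j$ and $\mathrm{End}_G(V_i)\cong\mathbb{C}$, whence
\[ \mathrm{End}_G\!\left(\textstyle\bigoplus_i m_i V_i\right) \;\cong\; \bigoplus_i M_{m_i}(\mathbb{C}). \]
This direct sum of full matrix algebras is commutative if and only if every block is $1\times 1$, i.e. every $m_i=1$; this is precisely the statement that $1\uparrow_K^G$ is multiplicity-free. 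Combining the two displays gives (2)$\Leftrightarrow$(3).

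Finally, for the ``moreover'' clause I would exploit the linear anti-automorphism $\ast$ of $\mathbb{C}[G]$ extending $g\mapsto g^{-1}$. Since $e^\ast = e$, it restricts to an anti-automorphism of the double-coset algebra, and because $(KgK)^{-1}=Kg^{-1}K$ it carries the basis element $\mathbf{1}_{KgK}$ to $\mathbf{1}_{Kg^{-1}K}$. If $KgK=Kg^{-1}K$ for all $g$, then $\ast$ fixes every basis element and hence acts as the identity on $\mathbb{C}[K\backslash G/K]$; an anti-automorphism that is the identity map forces $ab=(ab)^\ast = b^\ast a^\ast = ba$, so the algebra is commutative and the pair is Gelfand by (3). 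The one point requiring genuine care throughout is the bookkeeping in the isomorphism of the first display --- tracking the convolution product and the opposite-algebra convention --- but, as noted, this subtlety evaporates once commutativity is the only question on the table.
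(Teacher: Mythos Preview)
The paper does not supply a proof of this theorem; it is quoted from \cite{MacDonald95} as background and used without argument. So there is no ``paper's own proof'' to compare against. Your proposal is the standard argument and is correct: the identification $\mathbb{C}[K\backslash G/K]\cong e\,\mathbb{C}[G]\,e\cong \mathrm{End}_G(1\!\uparrow_K^G)^{\mathrm{op}}$ together with Schur's lemma gives $\mathrm{End}_G(\bigoplus_i m_iV_i)\cong\bigoplus_i M_{m_i}(\mathbb{C})$, which is commutative iff every $m_i=1$. Your handling of the ``moreover'' clause via the inversion anti-automorphism is exactly Gelfand's trick and correctly shows that the double-coset condition forces commutativity, hence that such pairs are automatically Gelfand. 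The only quibble is cosmetic: the sentence in the paper reads more like a definition of ``symmetric Gelfand pair'' than a claim requiring proof, but your argument is precisely what justifies why the definition is sensible (namely, the hypothesis already implies the pair is Gelfand), so including it does no harm.
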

\noindent Let $(G,K)$ be a Gelfand pair, $X := G/K$, and define $\chi_i$ to be the character of $V_i$ as in the second statement of Theorem~\ref{thm:gelfandPair}, with dimension $d_i := \chi_i(1)$. The functions $\omega^1, \omega^2, \cdots,\omega^d \in \mathbb{C}[X]$ defined such that  
\[  \omega^i(g) = \frac{1}{|K|} \sum_{k \in K} \chi_i (g^{-1}k) \quad \forall g \in G\]
are called the \emph{spherical functions} and form an orthogonal basis for $\mathbb{C}[K \backslash G /K]$. We call the equation above \emph{the projection formula}, as $\omega^i$ is the projection of $\chi_i$ onto the space of $K$-invariant functions. 

The (left) $K$-orbits of $X$ partition the cosets into $(K\backslash G /K)$-double cosets, which correspond to \emph{spheres} $\Omega_0, \Omega_1, \cdots, \Omega_d \subseteq G/K$. It is helpful to think of spheres and spherical functions as the spherical analogues of conjugacy classes and irreducible characters respectively.  Indeed, the spherical functions are constant on spheres, and it can be shown that the number of distinct spherical functions equals the number of distinct irreducibles of $\mathbb{C}[X]$, equivalently, the number of spheres of $X$.  

We write $\omega^i_j$ for the value of the spherical function $\omega^i$ corresponding to the $i$th irreducible on the double coset corresponding to $\Omega_j$.
\begin{prop}\cite{BannaiI84}\label{prop:eig}
	Let $(G,K)$ be a finite Gelfand pair and let $P$ be the character table of the corresponding association scheme. Then
	\[ P_{i,j} = |\Omega_j| \omega^i_j.\]
\end{prop}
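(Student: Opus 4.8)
The plan is to work inside the permutation module $\mathbb{C}[X]$ with $X = G/K$, where the multiplicity-free decomposition $\mathbb{C}[X] \cong \bigoplus_{i=1}^d V_i$ guarantees that each associate $A_j$ — which commutes with the $G$-action and hence preserves each $V_i$ — acts on $V_i$ as a single scalar, necessarily the eigenvalue $P_{i,j}$. I would extract this scalar by evaluating both sides of $A_j u = P_{i,j}\,u$ at the base point $o := eK$ for a well-chosen $u \in V_i$. The natural candidate is the image of the delta function $\delta_o$ under the central projection
\[ E_i := \frac{d_i}{|G|}\sum_{g\in G}\overline{\chi_i(g)}\,\rho(g), \]
where $\rho$ denotes the $G$-action on $\mathbb{C}[X]$. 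Because the decomposition is multiplicity-free, the $\chi_i$-isotypic component is exactly $V_i$, so $E_i$ is the orthogonal projection onto $V_i$; and since $K$ fixes $o$, the vector $u_i := E_i\delta_o$ is $K$-invariant and nonzero, hence spans the one-dimensional space $V_i^K$.

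First I would compute $u_i$ explicitly as a function on $X$. Using $\rho(g)\delta_o = \delta_{gK}$ so that only terms with $g \in xK$ survive, one obtains
\[ u_i(xK) = \frac{d_i}{|G|}\sum_{g\in xK}\overline{\chi_i(g)} = \frac{d_i\,|K|}{|G|}\cdot\frac{1}{|K|}\sum_{k\in K}\overline{\chi_i(xk)}. \]
Comparing the inner average with the projection formula $\omega^i(g) = \frac{1}{|K|}\sum_{k\in K}\chi_i(g^{-1}k)$ identifies it as $\overline{\omega^i(x^{-1})}$, giving $u_i(xK) = \frac{d_i|K|}{|G|}\,\overline{\omega^i(x^{-1})}$. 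This is the heart of the argument: it shows that $u_i$ is, up to the constant $d_i|K|/|G|$, precisely the spherical function viewed as a $K$-bi-invariant function on $X$. In particular $u_i(o) = \frac{d_i|K|}{|G|}$, since $\omega^i(e) = \dim V_i^K = 1$ by Frobenius reciprocity, and $u_i$ is constant on each sphere $\Omega_j$ with value $\frac{d_i|K|}{|G|}\,\omega^i_j$ (using that the pair is symmetric, so $\omega^i(x^{-1}) = \omega^i(x)$, which is the setting relevant to the injection scheme).

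Finally I would evaluate $A_j u_i$ at $o$ in two ways. On the one hand, $A_j u_i = P_{i,j} u_i$ yields $(A_j u_i)(o) = P_{i,j}\,u_i(o) = P_{i,j}\cdot\frac{d_i|K|}{|G|}$. On the other hand, by the definitions of the associate $A_j$ and of the sphere $\Omega_j = \{yK : (o,yK)\in R_j\}$, we have $(A_j u_i)(o) = \sum_{yK \in \Omega_j} u_i(yK) = |\Omega_j|\cdot\frac{d_i|K|}{|G|}\,\omega^i_j$, since $u_i$ is constant on $\Omega_j$. Equating and cancelling the common nonzero factor $d_i|K|/|G|$ gives $P_{i,j} = |\Omega_j|\,\omega^i_j$, as desired.

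I expect the main obstacle to be the second step, the clean identification of $E_i\delta_o$ with (a multiple of) the spherical function: one must track complex conjugation and the direction of the group action carefully while passing between $\chi_i$, the matrix coefficient, and $\omega^i$, and confirm the normalization $\omega^i(e)=1$ so that $o$ genuinely realizes the sphere $\Omega_0$ on which $\omega^i \equiv 1$. Once $u_i$ is pinned down as a scalar multiple of $\omega^i$, the remaining steps reduce to evaluating a $G$-equivariant operator at a single point.
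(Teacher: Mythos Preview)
Your argument is correct and is essentially the standard proof one finds in the cited reference. Note, however, that the paper does not give its own proof of this proposition: it is stated with a citation to Bannai--Ito and used as a black box, so there is nothing in the paper to compare your approach against beyond confirming that your normalization matches the paper's convention $\omega^i(e)=1$ and hence $P_{i,0}=1$.
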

\subsection{The Injection Scheme}

In this section we recall some basic facts about the \emph{injection scheme}, a symmetric association scheme defined over the set of injections. For proofs of the following basic facts and a more detailed discussion of the injection scheme, we refer the reader to~\cite{LR}.\footnote{In~\cite{LR}, the scheme is called the $k$-partial permutation association scheme, as one may interpret injections as partial permutations.} 

The product $G := S_k \times S_n$ of two symmetric groups acts on an injection $\sigma \colon[k]\rightarrow[n]$ as
$(\pi,\rho)\colon \sigma \mapsto \rho *\sigma*\pi^{-1}$, where $(\pi,\rho)\in G$ and $*$ denotes the composition of functions. The stabilizer of the identity injective function with respect to this action is the group $K_{k,n}$, i.e., the cosets $G_{k,n}/K_{k,n}$ are in one-to-one correspondence with injective functions. This action gives a permutation representation $1 \uparrow_K^G$ that is multiplicity-free, i.e., $(G_{k,n},K_{k,n})$ is a symmetric Gelfand pair. By the Littlewood-Richardson rule, we have
\begin{align}
	1 \!\uparrow_{K_{k,n}}^{G_{k,n}}~~ \cong\!\!\!\!\!\!\! \bigoplus_{ \substack{ \mu \vdash k,\lambda \vdash n \\ \lambda/\mu \text{ is a horiz.~strip} }} \!\!\!\!\!\!\! \mu \otimes \lambda.
\end{align}
The orbitals of $G$ acting diagonally on $G/K \times G/K$ are in one-to-one correspondence with double cosets $K \backslash G / K$. If we think of injections graphically as maximum matchings of the complete bipartite graph $K_{k,n}$, then we observe that the double cosets and orbitals are in one-to-one correspondence with graph isomorphism classes that arise from the multiunion of any injection with the identity injection, i.e., a disjoint union of even paths and even cycles. In light of this, we use the notation $(\lambda | \rho)$ to denote the index of the orbital or double coset corresponding to the isomorphism class containing a cycle of length $2\lambda_i$ for all $1 \leq i \leq \ell(\lambda)$, and a path of length $2\rho_i$ for all $1 \leq i \leq \ell(\rho)$. Note that an isolated node is a path of length zero.

Let $C_{(\lambda | \rho)}$ denote the sphere corresponding to the cycle-path type $(\lambda|\rho)$.
For example, we have $(1,2,3,4) \in C_{(1^4 | 0^4)}$, $(2,1,3,5) \in C_{(2,1|0^3,1)}$, and $(5,6,7,8) \in C_{( \varnothing | 1^4)}$. The following result gives a simple count for the sizes of the spheres, analogous to the well-known formula for determining the size of a conjugacy class in $S_n$. 
\begin{prop}\label{prop:spheresizes}
	\cite{LR} For any cycle-path type $(\lambda | \rho)$, the size of the $(\lambda|\rho)$-sphere is
	\[ |C_{(\lambda | \rho)}| = \frac{k!(n-k)!}{\prod_{i=0}^k i^{\ell_i} \ell_i!r_i! } \]
	where $\lambda=(0^{\ell_0},1^{\ell_1},\cdots,k^{\ell_k})$f, $\rho=(0^{r_0}, 1^{r_1},\cdots,k^{r_k})$, and $\ell(\rho)=r_1+\cdots+r_k$.
\end{prop}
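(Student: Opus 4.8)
The plan is to count, for a fixed cycle-path type $(\lambda|\rho)$, the number of injections $\sigma \in S_{k,n}$ whose multiunion with the identity injection yields the prescribed isomorphism class, and to show this count equals $\frac{k!(n-k)!}{\prod_{i} i^{\ell_i}\ell_i!\,r_i!}$. Since the spheres are exactly the $K_{k,n}$-orbits on $G_{k,n}/K_{k,n}$, equivalently the double cosets, I would exploit the orbit-stabilizer principle: the whole group $G_{k,n}=S_k\times S_n$ acts transitively on $S_{k,n}$ with point-stabilizer $K_{k,n}=\mathrm{diag}(S_k)\times S_{n-k}$, so $|S_{k,n}|=|G_{k,n}|/|K_{k,n}| = \frac{k!\,n!}{k!\,(n-k)!}=\frac{n!}{(n-k)!}$, and each sphere $C_{(\lambda|\rho)}$ is a single $K_{k,n}$-orbit. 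Thus $|C_{(\lambda|\rho)}| = |K_{k,n}|/|\mathrm{Stab}_{K_{k,n}}(\sigma)|$ for any representative $\sigma$ of that type, and the entire problem reduces to computing the stabilizer order inside $K_{k,n}$.

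The key step is therefore to describe, combinatorially, the stabilizer of a representative injection $\sigma$ under the $K_{k,n}$-action $(\pi,\rho)\colon\sigma\mapsto\rho*\sigma*\pi^{-1}$ with $\pi\in\mathrm{diag}(S_k)$ acting simultaneously on domain and on the first $k$ symbols of the codomain, and $\rho\in S_{n-k}$ permuting the remaining $n-k$ symbols. I would first read off the symmetries of the associated bipartite-matching picture: the even cycles of length $2\lambda_i$ and even paths of length $2\rho_i$. Cycles of a common length $i$ may be permuted among themselves and each individually rotated, contributing the factor $i^{\ell_i}\ell_i!$ — this is the familiar automorphism count for a union of cycles, exactly as in the $S_n$ conjugacy-class formula. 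Paths of a common length contribute only the $r_i!$ permutations among equal-length paths, since a path has no nontrivial rotational symmetry once its two distinct endpoints (one in $[k]$, one in $[n]\setminus[k]$) are pinned down by the diagonal constraint. Gathering these gives $|\mathrm{Stab}_{K_{k,n}}(\sigma)| = \prod_i i^{\ell_i}\ell_i!\,r_i!$, whence
\[
|C_{(\lambda|\rho)}| = \frac{|K_{k,n}|}{\prod_i i^{\ell_i}\ell_i!\,r_i!} = \frac{k!\,(n-k)!}{\prod_i i^{\ell_i}\ell_i!\,r_i!},
\]
as claimed.

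The main obstacle is justifying the path-symmetry factor rigorously: one must verify that an element $(\pi,\rho)\in K_{k,n}$ fixing $\sigma$ cannot flip a path end-for-end, precisely because the two endpoints of each even path live in \emph{different} blocks — the $[k]$-side is moved by the diagonal copy $\mathrm{diag}(S_k)$ while the far endpoint in $[n]\setminus[k]$ is moved by $S_{n-k}$ — so there is no single group element realizing the reflection, in contrast to cycles where the diagonal rotation is available. I would make this precise by setting up an explicit canonical representative $\sigma$ for each type (a direct sum of standard cycle-blocks and path-blocks on consecutive symbols) and checking that the fixing condition forces $\pi$ and $\rho$ to act block-preservingly, then enumerating the block automorphisms directly. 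Alternatively, one can sidestep the explicit stabilizer by a direct constructive count: choose which domain and codomain symbols form each cycle and each path, and count the cyclic arrangements, recovering the same denominator; I would likely present the orbit-stabilizer argument as the cleaner route and relegate the direct count to a remark.
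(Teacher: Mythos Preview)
The paper does not supply its own proof of this proposition; it is quoted with a citation to \cite{LR}. Your orbit--stabilizer argument is correct and is the natural route: identify $C_{(\lambda|\rho)}$ as a single $K_{k,n}$-orbit on $S_{k,n}$, then compute the stabilizer of a representative $\sigma$ as the group of edge-colour-preserving, codomain-block-preserving automorphisms of the bipartite multigraph $e\cup\sigma$, obtaining $\prod_i i^{\ell_i}\ell_i!\,r_i!$. Your treatment of the path case is right: a reflection of a path of positive length would have to interchange a codomain endpoint in $[k]$ with one in $\{k+1,\dots,n\}$, which no element of $\mathrm{diag}(S_k)\times S_{n-k}$ can effect (equivalently, such a reflection would swap the $e$-edges with the $\sigma$-edges, so it is not colour-preserving either). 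One small point worth making explicit in a write-up is the $i=0$ contribution: the length-$0$ paths are exactly the isolated codomain vertices in $\{k+1,\dots,n\}\setminus\mathrm{im}(\sigma)$, permuted freely by the $S_{n-k}$ factor, which accounts for the $r_0!$ term, while the convention $0^{\ell_0}=0^0=1$ handles the vacuous cycle part at $i=0$.
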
 
Note that the orbitals can be represented as a set $\mathcal{A}_{k,n} := \{A_{(\lambda|\rho)}\}$ of symmetric matrices that sum to the all-ones matrix. In particular, we have
\[
A_{(\lambda|\rho)} = 
\begin{cases}
1 \quad &\text{if } i \cup j \cong (\lambda|\rho), \\
0 \quad &\text{otherwise},
\end{cases}
\]  
for all injections $i,j$ and cycle-path types $(\lambda|\rho)$. We call $\mathcal{A}_{k,n}$ \emph{the injection scheme}, or more precisely, \emph{the $(k,n)$-injection scheme}. Note that the valencies $v_{(\lambda | \rho)}$ are just the sizes of the spheres, and the multiplicity $m_{(\lambda | \rho)}$ is the dimension of the irreducible representation corresponding to $(\lambda | \rho)$. This correspondence between cycle-path types and irreducible representations can be described as follows.

Recall that irreducible representation that appear in the permutation representation of $G$ on injections is of the form $\alpha \otimes \beta$ where $\beta / \alpha$ is a horizontal strip of size $n-k$. Consider a tableau of $\beta$ such that the cells of $\beta / \alpha$ are marked.
Every column of $\alpha$ in $\beta$ with a marked cell below it corresponds to a part in $\rho$ whereas an unmarked column corresponds to a part in $\lambda$. 
For instance, taking $\alpha = (2,1)$ and $n = 7$, we have the following cycle-path types for varying $\alpha \otimes \beta$:
\[\underbrace{\young(~~\times\times,~\times,\times)}_{(\varnothing | 0^2,2,1)} \quad 
\underbrace{\young(~~\times\times\times,~,\times)}_{(1|0^3,2)} \quad 
\underbrace{\young(~~\times\times\times,~\times)}_{(2|0^3,1)} \quad  
\underbrace{\young(~~\times\times\times\times,~)}_{(2,1|0^4)}. \]
Note that marked singleton columns correspond to paths of length zero (i.e., isolated nodes).

We are now in a position to give a formula for $\omega^{(\lambda|\rho)}_{(\mu|\nu)}$ for all cycle-path types. Let $\alpha \otimes \beta$ be the irreducible representation of $G$ represented by the cycle-path type $(\lambda|\rho)$. We may pick a double coset representative $(\tau,\sigma) \in G$ of $(\mu|\nu)$ such that $\tau = ()$, as the one-sided action of $S_n$ on injections is transitive. We have 
	\begin{align*}
	\omega^{(\lambda|\rho)}_{(\mu|\nu)} = \omega^{(\lambda|\rho)}(((),\sigma)) &= \frac{1}{|K|}\sum_{k\in K} \chi_{\alpha \otimes \beta}(((),\sigma)^{-1}k)\\
	&= \frac{1}{|K|}\sum_{(k_1,k_2) \in K} \chi_{\alpha \otimes \beta}((k_1,\sigma^{-1}k_1k_2))\\
	&= \frac{1}{k!(n-k)!}\sum_{k_1 \in S_k} \chi_{\alpha}(k_1) \sum_{k_2 \in S_{n-k}} \chi_{\beta}(\sigma^{-1}k_1k_2).
	\end{align*}
Note that the entries of the character table of any symmetric association scheme are algebraic integers, and the characters of the symmetric group are integers; therefore, the projection formula above shows that the entries of the character table of the injection scheme are integers. 
As an aside, this gives a much simpler proof of the integrality of the spectra of so-called \emph{$(n,k,r)$-arrangement graphs}, which live in the Bose-Mesner algebra of $\mathcal{A}_{k,n}$ (see~\cite{Chen} for more details).

Although the projection formula gives an explicit way of computing the character table of $\mathcal{A}_{k,n}$, it is difficult to work with from both a computational and analytical point of view. It becomes prohibitively difficult to compute the character table of $\mathcal{A}_{k,n}$ using this formula for even modest values of $k,n$, and it seems difficult to derive good expressions for the characters of $\mathcal{A}_{k,n}$ using this formula. Indeed, we are unaware of any result that leverages the projection formula for spherical functions to derive tractable expressions for the character tables of association schemes associated with Gelfand pairs.  

\section{A Canonical Basis for the Injection Scheme}

Let $(\rho_1,V_1)$ and $(\rho_2,V_2)$ be two representations of a group $H$, and let $\phi : V_1 \rightarrow V_2$ be a linear transformation. We say that $\phi$ \emph{intertwines} $\rho_1$ and $\rho_2$ if $ \phi \rho_1(h) =  \rho_2(h) \phi$ for all $h \in H$. 

\begin{lemma}[Schur's Lemma]
	If $(\rho_1,V_1)$ and $(\rho_2,V_2)$ are irreducible representations of $H$ and $\phi$ is an intertwining map for $\rho_1$ and $\rho_2$, then either $\phi$ is the zero map or it is an isomorphism.
\end{lemma}

Let $\mathbb{C}[S_{k,n}]$ be space of all complex-valued functions defined over injections $S_{k,n}$. Let $\{e_i\}$ defined such that $e_i(j) = \delta_{i,j}$ for all $i,j \in S_{k,n}$ be the standard basis for this space.
For any $\lambda \vdash n$, let $M^\lambda$ be permutation representation of $S_n$ acting on the set of all $\lambda$-tabloids. Let $\{e_{\{t\}}\}$ defined such that $e_{\{t\}}(\{s\}) = \delta_{\{t\},\{s\}}$ for any two $\lambda$-tabloids $\{t\},\{s\}$ be the standard basis for this space. The product $M^\mu \otimes M^\lambda$ is a $G_{k,n}$-representation with basis $\{e_{\{s\}} \otimes e_{\{t\}}\}$ where $\{s\}$,$\{t\}$ range over all $\mu$-tabloids and $\lambda$-tabloids respectively.

Let $\{s\}$ be a $\mu$-tabloid and $\{t\}$ be a $\lambda$-tabloid such that $\mu \vdash k$ and $\lambda \vdash n$. We say that $\{s\},\{t\}$ \emph{covers} an injection $\sigma \in S_{k,n}$ if $\text{row}_{\{s\}}(i) = \text{row}_{\{t\}}(\sigma(i))$ for all $1 \leq i \leq k$. For example, the injections $(1,2,3,4,5)$ in red and $(2,3,6,5,4)$ in thick blue are covered by the tabloids in normal and bold lettering below, whereas the injection $(4,3,6,5,2)$ in dashed blue is not:
\begin{center}
\usetikzlibrary{arrows, fit, matrix, positioning, shapes, backgrounds,
}
\begin{tikzpicture}
\matrix (m) [
matrix of math nodes, 
nodes in empty cells,
minimum width=width("8"),
] {
	1 & \mathbf{1} & 2 & \mathbf{2} & 3 & \mathbf{3} & 7 & 8 \\
	4 & \mathbf{4} & 5 & \mathbf{5} \\
	6  \\
};

\draw[red] (m-1-1.center) -- (m-1-2.center);
\draw[red]  (m-1-3.center) --  (m-1-4.center);
\draw[red]  (m-1-5.center) -- (m-1-6.center);
\draw[blue,very thick] (m-1-2.center) to [bend right = 40]  (m-1-3.center);
\draw[blue,very thick] (m-1-4.center) to [bend right = 40]  (m-1-5.center);
\draw[blue,very thick] (m-1-1.center) to [bend left = 40]  (m-1-6.center);

\draw[red]  (m-2-1.center) --  (m-2-2.center);
\draw[red]  (m-2-3.center) --  (m-2-4.center);

\draw[blue,very thick] (m-2-1.center) to [bend left = 40]  (m-2-4.center);
\draw[blue,very thick] (m-2-2.center) to [bend right = 40]  (m-2-3.center);
       
\end{tikzpicture}
\quad\quad\quad\quad\begin{tikzpicture}
\matrix (m) [
matrix of math nodes, 
nodes in empty cells,
minimum width=width("8"),
] {
	1 & \mathbf{1} & 2 & \mathbf{2} & 3 & \mathbf{3} & 7 & 8 \\
	4 & \mathbf{4} & 5 & \mathbf{5} \\
	6  \\
};

\draw[red] (m-1-1.center) -- (m-1-2.center);
\draw[red]  (m-1-3.center) --  (m-1-4.center);
\draw[red]  (m-1-5.center) -- (m-1-6.center);
\draw[blue,dashed] (m-2-4.center) --  (m-1-3.center);
\draw[blue,dashed] (m-1-1.center) to [bend right = 40]  (m-1-4.center);
\draw[blue,dashed](m-3-1.center) -- (m-2-2.center);

\draw[red]  (m-2-1.center) --  (m-2-2.center);
\draw[red]  (m-2-3.center) --  (m-2-4.center);f

\draw[blue,dashed] (m-2-1.center) -- (m-1-2.center);
\draw[blue,dashed] (m-1-6.center) --   (m-2-3.center);

\end{tikzpicture}.
\end{center}
Let $1_{\{s\},\{t\}} \in \mathbb{C}[S_{k,n}]$ be the characteristic function of the set of injections covered by $\{s\},\{t\}$. For any $\mu \vdash k,\lambda \vdash n$ such that $\lambda / \mu$ is a horizontal strip, let $\phi_{\mu,\lambda} : M^\mu \otimes M^\lambda \rightarrow \mathbb{C}[S_{k,n}]$ be the map defined such that
\[\phi_{\mu,\lambda}(e_{\{s\}} \otimes e_{\{t\}}) = 1_{\{s\},\{t\}} \quad \text{ for all }\{s\},\{t\},  \]
then extending linearly. An injection $\sigma$ is covered by $\{s\}, \{t\}$ if and only if $(\tau, \pi) \sigma$ is covered by $(\{\tau s\}, \{\pi t\})$ for all $(\tau,\pi) \in G_{k,n}$. This implies that 
	\[ \phi_{\mu,\lambda}( \tau e_{\{s\}} \otimes \pi e_{\{t\}} ) = (\tau,\pi)\phi_{\mu,\lambda}( e_{\{s\}} \otimes e_{\{t\}} )\quad \text{for all } (\tau,\pi) \in G_{k,n}, \]
	i.e., the linear map $\phi_{\mu,\lambda}$ intertwines $M^\mu \otimes M^\lambda$ and $\mathbb{C}[S_{k,n}]$.

%
It is well-known that the $\lambda$-isotypic component of $M^\lambda$ has multiplicity 1, and so the $(\mu \otimes \lambda)$-isotypic component of $M^\mu \otimes M^\lambda$ has multiplicity 1.  
Let $(\rho_{\mu,\lambda},V_\mu \otimes V_\lambda)$ be this $G_{k,n}$-irreducible. A basis for $\rho_{\mu,\lambda}$ can be obtained by tensoring all pairs of standard $\mu$-polytabloids and standard $\lambda$-polytabloids. For each standard Young tableau $t$, let $e_t$ denote the corresponding standard polytabloid.

We say that an injection $\sigma$ is \emph{aligned} with respect to $\{s\},\{t\}$ if $\text{row}_{\{s\}}(i) = \text{row}_{\{t\}}(\sigma(i))$ and $\text{col}_{s}(i) = \text{col}_{t}(\sigma(i))$ for all $1 \leq i \leq k$. In the example above, the blue dashed injection $(2,3,6,5,4)$ is not aligned with the tabloids above, but the red injection $(1,2,3,4,5)$ is.
\begin{lemma}\label{lem:nz}
	For each irreducible representation $V_\mu \otimes V_\lambda$ of the induced representation $1\!\!\uparrow_{K_{k,n}}^{G_{k,n}}$, there exists a $v \in V_\mu \otimes V_\lambda$ such that $\phi_{\mu,\lambda}(v) \neq 0$.
\end{lemma}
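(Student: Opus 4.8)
The plan is to exhibit a single explicit vector with nonzero image. Once this is done, Schur's Lemma applied to the restriction of the intertwiner $\phi_{\mu,\lambda}$ to the irreducible $V_\mu \otimes V_\lambda$ upgrades the conclusion to injectivity of $\phi_{\mu,\lambda}$ on $V_\mu \otimes V_\lambda$, though only the weaker nonvanishing is asked for here. The vector I would use is $v = e_s \otimes e_t$, where $e_s$ is a standard $\mu$-polytabloid and $e_t$ a standard $\lambda$-polytabloid. Since standard polytabloids lie in the respective Specht modules $V_\mu \subseteq M^\mu$ and $V_\lambda \subseteq M^\lambda$, we have $v \in V_\mu \otimes V_\lambda$, so it remains only to choose $s$ and $t$ cleverly and to verify $\phi_{\mu,\lambda}(v) \neq 0$ by evaluating the image at one well-chosen injection.

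Concretely, I would take $s$ and $t$ to be \emph{aligned}: let $t$ be the standard $\lambda$-tableau whose $\mu$-subshape carries the entries $1,\dots,k$ exactly as in $s$, and whose horizontal strip $\lambda/\mu$ carries $k+1,\dots,n$ increasing from left to right (this is standard precisely because $\lambda/\mu$ is a horizontal strip). The identity injection $\sigma(i)=i$ is then aligned with, hence covered by, $\{s\},\{t\}$. Writing $e_s = \sum_{\pi \in C_s}\operatorname{sgn}(\pi)\{\pi s\}$ and $e_t = \sum_{\pi'\in C_t}\operatorname{sgn}(\pi')\{\pi' t\}$ for the column stabilizers $C_s, C_t$, the definition of $\phi_{\mu,\lambda}$ gives
\[
\phi_{\mu,\lambda}(e_s\otimes e_t)(\sigma)=\sum_{\pi\in C_s}\sum_{\pi'\in C_t}\operatorname{sgn}(\pi)\operatorname{sgn}(\pi')\,\mathbf{1}\bigl[\sigma\text{ is covered by }\{\pi s\},\{\pi' t\}\bigr],
\]
and the goal is to show the right-hand side is nonzero.

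The heart of the argument, and the step I expect to be the main obstacle, is to control this signed double sum and rule out cancellation. I would argue that coverage forces $\pi'|_{[k]}=\pi$, as follows. Since $\pi\in C_s$ and $\pi'\in C_t$ permute entries only within columns, and since alignment identifies the cells (hence rows and columns) of $1,\dots,k$ in $s$ and in $t$, the element $\pi'^{-1}(\pi(j))$ always lies in the same column of $t$ as $j$; the coverage (row) condition then pins it to the same row, hence to the same cell, giving $\pi'(j)=\pi(j)$ for every $j\in[k]$. Because $\lambda/\mu$ is a horizontal strip, each column of $t$ consists of its $\mu$-cells on top with at most one strip-cell beneath them, so the only $\pi'\in C_t$ restricting to a given $\pi\in C_s$ on $[k]$ is the one fixing every strip entry; this $\pi'$ is unique and satisfies $\operatorname{sgn}(\pi')=\operatorname{sgn}(\pi)$. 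Consequently each surviving term contributes $\operatorname{sgn}(\pi)^2=1$, and
\[
\phi_{\mu,\lambda}(e_s\otimes e_t)(\sigma)=\sum_{\pi\in C_s}1=|C_s|\neq 0,
\]
which proves the lemma. The delicate point throughout is the horizontal-strip hypothesis: it is exactly what guarantees the clean column structure (one strip cell per column, sitting below a contiguous block of $\mu$-cells) that makes the extension of $\pi$ to $\pi'$ both unique and sign-preserving, and hence prevents the signs from cancelling.
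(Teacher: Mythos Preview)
Your proof is correct and follows essentially the same route as the paper's own argument: choose an aligned pair $(s,t)$ of standard tableaux, evaluate $\phi_{\mu,\lambda}(e_s\otimes e_t)$ at the identity injection, and show the signed double sum collapses to $|C_s|$ because coverage forces $\pi'|_{[k]}=\pi$ and the horizontal-strip condition forces $\pi'$ to fix every strip entry. The only cosmetic difference is that the paper pins down $s$ to be the specific ``row-reading'' tableau with entries $1,\dots,k$ filled left to right, top to bottom, whereas you allow any standard $s$; this extra generality is harmless and the argument goes through verbatim.
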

\begin{proof}
	Let $e$ be the identity injection. Consider the pair of standard Young tableaux $s,t$ of shape $\mu$ and $\lambda$ respectively obtained by inserting the numbers $1,2,\cdots,k$ into the rows of $s$ from left to right, top to bottom, then taking $t$ to be the standard Young tableau obtained from $s$ by adding a horizontal strip and labeling the cells $k+1,k+2, \cdots, n$ from left to right. For example, if $\mu = (3,2,1)$ and $\lambda = (4,3,2)$, then $s$ and $t$ are
	\[\young(123,45,6) \quad \quad\quad\quad\quad
	\young(1239,458,67). \]
	Note that $e$ is aligned with respect to $\{s\},\{t\}$. Let $C_s,C_t$ denote the column-stabilizers of $s$ and $t$ respectively. It is clear that 
	\[e_s \otimes e_t = \sum_{\pi \in C_s,\pi' \in C_t} \text{sgn}(\pi)~\text{sgn}(\pi')~ e_{\{\pi s\}} \otimes e_{\{\pi' t\}}.\]
	Let $v = e_s \otimes e_t$ and $f = \phi_{\mu,\lambda}(v)$. We have
	\[f(e) = \sum_{\pi \in C_s,\pi' \in C_t} \text{sgn}(\pi)~\text{sgn}(\pi')~ 1_{\{\pi s\},\{\pi' t\}}(e). \]
	If $\pi \in C_s$ sends $i$ to $j$ such that $1 \leq i,j \leq k$, then $\pi' \in C_t$ must also send $i$ to $j$, otherwise $\{\pi s\},\{\pi' t\}$ does not cover $e$. On the other hand, if $\pi' \in C_t$ sends $i$ to $j$ such that $1 \leq i \leq k$ and $k+1 \leq j \leq n$, then $(\{\pi s\},\{\pi' t\})$ does not cover $e$ for all $\pi \in C_s$, which implies that the cells of the horizontal strip $\lambda/\mu$ are fixed points of every $\pi' \in C_t$ such that $\{\pi s\},\{\pi' t\}$ covers $e$. The foregoing implies that $\text{sgn}(\pi)~\text{sgn}(\pi') = 1$ if and only if $\{\pi s\},\{\pi' t\}$ covers $e$. In particular, we have
	\[f(e) = \sum_{\pi \in C_s,\pi' \in C_t} \text{sgn}(\pi)~\text{sgn}(\pi')~ 1_{\{\pi s\},\{\pi' t\}}(e) = |C_s|, \]
	thus $f = \phi_{\mu,\lambda}(v)  \neq 0$, as desired.
\end{proof}
\noindent Now let $f_{s,t} := \phi_{\mu, \lambda}$ where $s,t$ are standard Young tableaux of shape $\mu \vdash k$ and $\lambda \vdash n$ such that $\lambda / \mu$ is a horizontal strip.
Let $\mathcal{F} := \{f_{s,t}\}$ where $s$ and $t$ range over all such standard Young tableaux. By Lemma~\ref{lem:nz}, $\phi_{\mu,\lambda}$ is not the zero map, so by Schur's Lemma, we have that $\phi_{\mu,\lambda}$ is an isomorphism. 
	The elements of $\mathcal{F}$ are pairwise linearly independent, hence Corollary~\ref{cor:count} implies that $\mathcal{F}$ is a basis. Moreover, we have the property that basis functions in different isotypic components are orthogonal, thus we arrive at the following theorem.
\begin{thm}
	The set $\mathcal{F}$ is a basis for $\mathbb{C}[S_{k,n}]$ such that $\langle f_{q,r},f_{s,t}\rangle = 0$ for all $f_{q,r} \in V_{\mu \otimes \lambda}$ and $f_{s,t} \in V_{\mu' \otimes \lambda'}$ such that $\lambda / \mu \neq \lambda' / \mu'$.
\end{thm}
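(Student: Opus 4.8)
The plan is to turn the pieces already assembled---Lemma~\ref{lem:nz}, Schur's Lemma, and Corollary~\ref{cor:count}---into an explicit irreducible decomposition of $\mathbb{C}[S_{k,n}]$, and then to read off both the basis property and the orthogonality from the fact that the blocks carry pairwise distinct irreducible characters. Throughout, write $V_{\mu\otimes\lambda} := \phi_{\mu,\lambda}(V_\mu\otimes V_\lambda)$ for the image inside $\mathbb{C}[S_{k,n}]$. Fixing a pair $(\mu,\lambda)$ with $\lambda/\mu$ a horizontal strip, the standard polytabloids furnish bases of the Specht modules $V_\mu$ and $V_\lambda$, so $\{e_s\otimes e_t\}$ is a basis of the irreducible $V_\mu\otimes V_\lambda$. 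By Lemma~\ref{lem:nz}, $\phi_{\mu,\lambda}$ is nonzero on this irreducible, so Schur's Lemma forces its restriction to be injective; hence $V_{\mu\otimes\lambda}$ is an irreducible $G_{k,n}$-subspace isomorphic to $\mu\otimes\lambda$ with basis $\{f_{s,t}\}$, which settles linear independence within a single block.

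Next I would compare distinct blocks. Because the irreducibles of $G_{k,n}=S_k\times S_n$ are exactly the external products $\mu\otimes\lambda$, the subspaces attached to distinct pairs carry pairwise non-isomorphic characters, and a family of irreducible subrepresentations of pairwise distinct types is automatically in direct sum; thus $\mathcal{F}$ is linearly independent. Its cardinality is $\sum f^\mu f^\lambda$ over all horizontal-strip pairs, which by Corollary~\ref{cor:count} equals $|S_{k,n}| = \dim\mathbb{C}[S_{k,n}]$, and a linearly independent set of the correct size is a basis.

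For the orthogonality statement I would use that $G_{k,n}$ acts on $\mathbb{C}[S_{k,n}]$ by permuting $S_{k,n}$, hence unitarily for the standard Hermitian inner product, so orthogonal projection onto any subrepresentation is $G_{k,n}$-equivariant. If $\lambda/\mu\neq\lambda'/\mu'$ then necessarily $(\mu,\lambda)\neq(\mu',\lambda')$, so $V_{\mu\otimes\lambda}$ and $V_{\mu'\otimes\lambda'}$ are non-isomorphic irreducibles; the composition of the inclusion of one with the orthogonal projection onto the other is an intertwiner of non-isomorphic irreducibles, hence zero by Schur. This gives $V_{\mu\otimes\lambda}\perp V_{\mu'\otimes\lambda'}$, and in particular $\langle f_{q,r},f_{s,t}\rangle = 0$.

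The only delicate point is the cross-block step, where one must know the images are genuinely irreducible of type $\mu\otimes\lambda$---not merely nonzero---which is precisely the role of Lemma~\ref{lem:nz} together with Schur. It is also worth noting that the stated hypothesis $\lambda/\mu\neq\lambda'/\mu'$ is formally weaker than $(\mu,\lambda)\neq(\mu',\lambda')$: distinct pairs may share a common skew shape, so equal skew shapes impose no constraint, whereas equal pairs clearly force equal skew shapes. Hence $\lambda/\mu\neq\lambda'/\mu'$ does imply $(\mu,\lambda)\neq(\mu',\lambda')$, and the weaker hypothesis suffices with no loss. Everything else---injectivity within a block and the dimension match---is bookkeeping, the latter being exactly Corollary~\ref{cor:count}.
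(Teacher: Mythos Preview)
Your proposal is correct and follows essentially the same route as the paper: nonvanishing from Lemma~\ref{lem:nz}, Schur's Lemma to make each $\phi_{\mu,\lambda}$ an isomorphism onto its image, the dimension count from Corollary~\ref{cor:count}, and orthogonality of distinct isotypic components. You have in fact filled in details the paper leaves implicit---the cross-block independence via pairwise non-isomorphic irreducibles, the unitarity argument for orthogonality, and the observation that $\lambda/\mu\neq\lambda'/\mu'$ forces $(\mu,\lambda)\neq(\mu',\lambda')$---so your version is a more complete rendering of the same argument.
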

\noindent It would be interesting to refine the result above to a \emph{Fourier basis} for $\mathbb{C}[S_{k,n}]$, that is, further require that basis functions in the same isotypic component are orthogonal. Note that Young's orthogonal form furnishes such a basis for the case  $k=n$ (see~\cite{Diaconis88}).
\begin{thm}[Frobenius Reciprocity]\label{thm:frobenius}
	Let $\rho$ be an irreducible representation of a group $H$ and let $K$ be a subgroup of $H$. The multiplicity of the $\rho$-isotypic component of $1\!\uparrow^H_K$ is the dimension of the subspace of $K$-invariant functions of the $\rho$-isotypic component.
\end{thm}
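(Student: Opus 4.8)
The plan is to reduce the statement to Schur's Lemma (available above) together with the standard adjunction between induction and restriction. Write $M := 1\uparrow_K^H = \operatorname{Ind}_K^H \mathbb{C}$ for the permutation module of $H$ acting on $H/K$, and let $V_\rho$ denote the space carrying the irreducible $\rho$. Since $M$ is completely reducible and $\rho$ is irreducible, Schur's Lemma identifies the multiplicity of $\rho$ in $M$ with $\dim_{\mathbb{C}} \operatorname{Hom}_H(M, V_\rho)$: every nonzero $H$-equivariant map lands in a single copy of $V_\rho$ and restricts to an isomorphism there, so the dimension of this intertwiner space counts the copies of $\rho$ inside $M$. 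Thus it suffices to show $\dim_{\mathbb{C}} \operatorname{Hom}_H(M, V_\rho) = \dim_{\mathbb{C}} V_\rho^K$, where $V_\rho^K$ is the subspace of $K$-fixed vectors (the ``$K$-invariant functions of the $\rho$-component'').

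The content is the adjunction $\operatorname{Hom}_H(\operatorname{Ind}_K^H \mathbb{C}, V_\rho) \cong \operatorname{Hom}_K(\mathbb{C}, \operatorname{Res}_K^H V_\rho)$, which I would establish concretely. An $H$-map $\Phi \colon M \to V_\rho$ is determined by its value on the canonical generator $e_{eK}$ (the characteristic function of the trivial coset), since the $H$-translates of $e_{eK}$ span $M$; and because $e_{eK}$ is fixed by $K$, $H$-equivariance forces $\Phi(e_{eK}) \in V_\rho^K$. Conversely, any $v \in V_\rho^K$ extends to a well-defined $H$-map by setting $\Phi(h \cdot e_{eK}) := \rho(h)v$, the $K$-invariance of $v$ guaranteeing independence of the coset representative. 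These two assignments are mutually inverse linear maps, so $\operatorname{Hom}_H(M, V_\rho) \cong V_\rho^K$ and the multiplicity equals $\dim_{\mathbb{C}} V_\rho^K$, as claimed. (Equivalently, one may argue by characters: the induced-character formula gives $\langle \chi_M, \chi_\rho\rangle_H = \langle 1_K, \operatorname{Res}_K^H \chi_\rho\rangle_K = \frac{1}{|K|}\sum_{k\in K}\chi_\rho(k)$, and the operator $\frac{1}{|K|}\sum_{k\in K}\rho(k)$ is the idempotent projecting $V_\rho$ onto $V_\rho^K$, whose trace is exactly $\dim_{\mathbb{C}} V_\rho^K$.)

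The only real obstacle is bookkeeping the natural isomorphism in the second step carefully enough that it is visibly well defined and invertible — in particular checking that ``extend a $K$-fixed vector $H$-equivariantly'' does not depend on the chosen coset representative. Everything else (the passage from multiplicity to an intertwiner dimension, and the identification of $\operatorname{Hom}_K(\mathbb{C}, V_\rho)$ with $V_\rho^K$ via evaluation at $1$) is immediate from Schur's Lemma and the definition of a $K$-fixed vector. Since this is the classical Frobenius reciprocity, one could instead simply invoke it, but the short argument above keeps the exposition self-contained given that Schur's Lemma has already been recorded.
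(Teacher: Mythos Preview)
The paper does not supply its own proof of this statement; it is recorded as the classical Frobenius Reciprocity theorem and invoked as a black box in the lemma that follows. Your argument is correct and is precisely the standard proof: identify the multiplicity with $\dim \operatorname{Hom}_H(M,V_\rho)$ via Schur's Lemma, then use the induction--restriction adjunction (realized concretely by evaluation at the base coset) to identify this with $\dim V_\rho^K$. The parenthetical character-theoretic alternative is also standard and equally valid. There is nothing to compare against, and nothing is missing from your write-up.
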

\noindent Let $Q_{k,n}$ denote the projection onto the space of $K_{k,n}$-invariant functions. For any $\mu \vdash k$, define $\mu! := \mu_1!\mu_2!\cdots \mu_{\ell(\mu)}!$.
\begin{lemma}
	Let $s,t$ be standard Young tableaux of shape $\mu \vdash k$ and $\lambda \vdash n$ such that $\lambda / \mu$ is a horizontal strip. If $Q_{k,n}f_{s,t} \neq 0$, then $\frac{1}{(\mu^\top)!}Q_{k,n}f_{s,t}$ is the $(\mu \otimes \lambda)$-spherical function. 
\end{lemma}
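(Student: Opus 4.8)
The plan is to pin down $Q_{k,n}f_{s,t}$ as a scalar multiple of the $(\mu\otimes\lambda)$-spherical function by a one-dimensionality argument, and then determine the scalar by evaluating at the identity injection $e$.

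First I would locate $f_{s,t}$ inside the right irreducible. The tensor $e_s\otimes e_t$ of standard polytabloids lies in $V_\mu\otimes V_\lambda$, the $(\mu\otimes\lambda)$-isotypic component of $M^\mu\otimes M^\lambda$, and by Lemma~\ref{lem:nz} together with Schur's Lemma the map $\phi_{\mu,\lambda}$ restricts there to an isomorphism onto the irreducible $V_{\mu\otimes\lambda}\subseteq\mathbb{C}[S_{k,n}]$. Hence $f_{s,t}\in V_{\mu\otimes\lambda}$. The projection $Q_{k,n}=\frac{1}{|K_{k,n}|}\sum_{\kappa\in K_{k,n}}\kappa$ is an average over $K_{k,n}\le G_{k,n}$, so it carries every $G_{k,n}$-submodule into itself; thus $Q_{k,n}f_{s,t}\in V_{\mu\otimes\lambda}$, and it is $K_{k,n}$-invariant by construction. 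By Frobenius reciprocity (Theorem~\ref{thm:frobenius}) and the multiplicity-freeness of $1\uparrow_{K_{k,n}}^{G_{k,n}}$, the space of $K_{k,n}$-invariants in $V_{\mu\otimes\lambda}$ is one-dimensional, spanned by the $(\mu\otimes\lambda)$-spherical function $\omega^{(\mu\otimes\lambda)}$ (which lies in $V_{\mu\otimes\lambda}$ and is nonzero). Consequently $Q_{k,n}f_{s,t}=c\,\omega^{(\mu\otimes\lambda)}$ for some scalar $c$, and the statement reduces to showing $c=(\mu^\top)!$.

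To evaluate $c$ I would compare both sides at $e$. Since $K_{k,n}$ is exactly the stabilizer of $e$, each $\kappa\in K_{k,n}$ fixes $e$, so
\[(Q_{k,n}f_{s,t})(e)=\frac{1}{|K_{k,n}|}\sum_{\kappa\in K_{k,n}}f_{s,t}(\kappa^{-1}e)=f_{s,t}(e),\]
while the projection formula gives $\omega^{(\mu\otimes\lambda)}(e)=\frac{1}{|K_{k,n}|}\sum_{\kappa}\chi_{\mu\otimes\lambda}(\kappa)=1$, this last quantity being the multiplicity of the trivial representation in $\chi_{\mu\otimes\lambda}|_{K_{k,n}}$. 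Therefore $c=f_{s,t}(e)$, and in particular the hypothesis $Q_{k,n}f_{s,t}\neq0$ is the statement that $f_{s,t}(e)\neq0$; everything now hinges on the single evaluation $f_{s,t}(e)=(\mu^\top)!$.

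This evaluation is where I expect the real work to be, and it is exactly the computation performed in Lemma~\ref{lem:nz}, now needed in general. Expanding over the column stabilizers,
\[f_{s,t}(e)=\sum_{\pi\in C_s,\ \pi'\in C_t}\text{sgn}(\pi)\,\text{sgn}(\pi')\,1_{\{\pi s\},\{\pi' t\}}(e),\]
I would use that $e$ is aligned with $s,t$ to argue that $\{\pi s\},\{\pi' t\}$ can cover $e$ only when $\pi'$ agrees with $\pi$ on $[k]$ and fixes every cell of the horizontal strip $\lambda/\mu$; for each such pair $\text{sgn}(\pi)\,\text{sgn}(\pi')=\text{sgn}(\pi)^2=1$, and the surviving pairs are in bijection with $C_s$. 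This would give $f_{s,t}(e)=|C_s|=(\mu^\top)!$ (the order of the column stabilizer of a tableau of shape $\mu$), and dividing by $(\mu^\top)!$ completes the proof. The main obstacle is the sign bookkeeping in this alternating sum: one must verify that no misaligned pair contributes and that each surviving contribution is $+1$, so that the sum collapses to $|C_s|$ rather than cancelling. This is the only step that genuinely uses the combinatorics of the cover relation together with the horizontal-strip condition, and it is the delicate heart of the argument.
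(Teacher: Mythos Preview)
Your approach mirrors the paper's exactly: place $f_{s,t}$ in the irreducible $V_{\mu\otimes\lambda}$, note that $Q_{k,n}$ preserves it, use Frobenius reciprocity to get a one-dimensional space of $K_{k,n}$-invariants, and conclude $Q_{k,n}f_{s,t}$ is a scalar multiple of the spherical function. You are more explicit than the paper in pinning down the scalar as $f_{s,t}(e)$ via $(Q_{k,n}f_{s,t})(e)=f_{s,t}(e)$ and $\omega^{(\mu\otimes\lambda)}(e)=1$, and then invoking the computation $f_{s,t}(e)=|C_s|=(\mu^\top)!$ from Lemma~\ref{lem:nz}; the paper simply asserts the normalization without spelling this out.

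One point to flag: your evaluation $f_{s,t}(e)=(\mu^\top)!$ relies on $e$ being \emph{aligned} with $s,t$, which holds only for the specific pair constructed in Lemma~\ref{lem:nz}, not for arbitrary standard tableaux of shapes $\mu$ and $\lambda$. The lemma as literally stated for general $s,t$ is in fact false with this fixed normalization: for $k=n=3$ and $\mu=\lambda=(2,1)$, taking $s$ and $t$ to be the two \emph{different} standard $(2,1)$-tableaux, one checks directly that $f_{s,t}(e)=1\neq 2=(\mu^\top)!$ while $Q_{k,n}f_{s,t}\neq 0$. The paper carries the same imprecision in its statement, but immediately before Theorem~\ref{thm:main} it restricts to the aligned pair, so nothing downstream is affected. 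Your phrase ``now needed in general'' is therefore misleading: the sign-bookkeeping argument you outline is valid only for the aligned $s,t$, and that is exactly the case the paper uses.
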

\begin{proof}
	By construction, $f_{s,t} \in \mathbb{C}[S_{k,n}]$ lives in the irreducible $W \leq \mathbb{C}[S_{k,n}]$ that is isomorphic to $V_{\mu} \otimes V_{\lambda}$. Because $Q_{k,n}$ sends $W$ to $W$, we have that $Q_{k,n}f_{s,t} \in W$ is a $K_{k,n}$-invariant function. By Frobenius Reciprocity, the space of $K_{k,n}$-invariant functions of $W$ has dimension 1; therefore, if $Q_{k,n}f_{s,t} \neq 0$, then it is the $(\mu \otimes \lambda)$-spherical function up to scaling. To ensure that the $(\mu,\lambda)$-spherical function is 1 on the $K_{k,n} \backslash () /K_{k,n}$ double coset, we normalize by $|C_s| = (\mu^\top)!$.
\end{proof}
\noindent We are now ready to give a proof of our formula for the spherical functions of $(G_{k,n},K_{k,n})$. Let $s,t$ be the pair of standard Young tableaux as defined in the proof of Lemma~\ref{lem:nz}.
\begin{thm}\label{thm:main}
	Let $\omega^{\mu \otimes \lambda}$ be the $(\mu \otimes \lambda)$-spherical function of the Gelfand pair $(G_{k,n},K_{k,n})$. Then
	\[ \omega^{\mu \otimes \lambda}_{(\gamma|\rho)} =  \frac{1}{|C_{(\gamma|\rho)}|} \sum_{\sigma \in C_{(\gamma|\rho)}}\sum_{\pi \in C_t} \emph{sgn}(\pi)~1_{\{s\}, \{\pi t\}}(\sigma), \text{ and } \]
	\[P_{\mu \otimes \lambda, (\gamma|\rho)} = \sum_{\sigma \in C_{(\gamma|\rho)}}\sum_{\pi \in C_t} \emph{sgn}(\pi)~1_{\{s\}, \{\pi t\}}(\sigma)\]
	for all cycle-path types $(\gamma|\rho)$. Moreover, .
\end{thm}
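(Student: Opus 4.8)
The plan is to assemble Theorem~\ref{thm:main} directly from the two lemmas that precede it, using the canonical basis element $f_{s,t}$ built in Lemma~\ref{lem:nz}. By the previous lemma, once $Q_{k,n}f_{s,t} \neq 0$ the normalized projection $\frac{1}{(\mu^\top)!}Q_{k,n}f_{s,t}$ is exactly the $(\mu \otimes \lambda)$-spherical function. So the first step is to write out $Q_{k,n}f_{s,t}$ explicitly by unwinding the projection formula for $\omega^{\mu \otimes \lambda}$ against the function $f_{s,t} = \phi_{\mu,\lambda}(e_s \otimes e_t)$. Expanding $e_s \otimes e_t$ as the signed sum over column stabilizers $C_s \times C_t$ (as in the proof of Lemma~\ref{lem:nz}), I get $f_{s,t} = \sum_{\pi \in C_s, \pi' \in C_t}\mathrm{sgn}(\pi)\,\mathrm{sgn}(\pi')\,1_{\{\pi s\},\{\pi' t\}}$.

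Next I would evaluate this on a double coset representative $\sigma \in C_{(\gamma|\rho)}$ and average over the sphere. The key simplification, which I expect to come from the same fixed-point analysis carried out in Lemma~\ref{lem:nz}, is that only the \emph{standard} tableau $s$ itself (i.e.\ $\pi = \mathrm{id}$ in $C_s$) contributes nontrivially after projection, collapsing the double sum over $C_s \times C_t$ to a single sum over $\pi \in C_t$ with the row-tableau $\{s\}$ held fixed. The $\mathrm{sgn}(\pi)\,\mathrm{sgn}(\pi')$ pairing forces $\mathrm{sgn}(\pi) = \mathrm{sgn}(\pi')$ whenever $\{\pi s\},\{\pi' t\}$ covers $\sigma$, and the cells of the horizontal strip $\lambda/\mu$ are fixed, so the surviving terms are precisely $\sum_{\pi \in C_t}\mathrm{sgn}(\pi)\,1_{\{s\},\{\pi t\}}(\sigma)$. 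The factor $(\mu^\top)! = |C_s|$ is exactly what the averaging over $C_s$ produces, and it cancels against the normalization in the preceding lemma, leaving the stated formula for $\omega^{\mu \otimes \lambda}_{(\gamma|\rho)}$ after dividing by $|C_{(\gamma|\rho)}|$ to average over the sphere.

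Finally, the formula for $P_{\mu \otimes \lambda,(\gamma|\rho)}$ follows immediately from Proposition~\ref{prop:eig}, which states $P_{i,j} = |\Omega_j|\,\omega^i_j$: multiplying $\omega^{\mu \otimes \lambda}_{(\gamma|\rho)}$ by $|C_{(\gamma|\rho)}| = |\Omega_{(\gamma|\rho)}|$ cancels the leading reciprocal and yields the unaveraged double sum. So the two displayed identities differ only by this sphere-size factor.

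The main obstacle I anticipate is the bookkeeping in the fixed-point argument: justifying rigorously that the projection $Q_{k,n}$ (averaging $f_{s,t}$ over the stabilizer $K_{k,n} = \mathrm{diag}(S_k)\times S_{n-k}$) kills every term in which the row-structure of $\{s\}$ is disturbed, so that one genuinely recovers a single sum over $C_t$ rather than the full $C_s \times C_t$ sum. This requires carefully tracking how the diagonal $S_k$-action and the $S_{n-k}$-action on the strip interact with the covering condition $\mathrm{row}_{\{s\}}(i) = \mathrm{row}_{\{t\}}(\sigma(i))$, and verifying that the constant $(\mu^\top)!$ emerging from this collapse matches $|C_s|$ so the normalization is consistent. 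Everything else is a mechanical substitution into the projection formula and Proposition~\ref{prop:eig}.
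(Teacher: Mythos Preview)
Your overall architecture is right: start from $Q_{k,n}f_{s,t}$, expand $e_s\otimes e_t$ as a signed sum over $C_s\times C_t$, average over the sphere, cancel $(\mu^\top)!$, and invoke Proposition~\ref{prop:eig} for the second display. The problem is the mechanism you propose for collapsing the $C_s\times C_t$ sum to a sum over $C_t$ alone.

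You claim that ``only $\pi=\mathrm{id}$ in $C_s$ contributes nontrivially after projection'' and that ``the $\mathrm{sgn}(\pi)\,\mathrm{sgn}(\pi')$ pairing forces $\mathrm{sgn}(\pi)=\mathrm{sgn}(\pi')$ whenever $\{\pi s\},\{\pi' t\}$ covers $\sigma$.'' Neither is correct. The sign-matching argument in Lemma~\ref{lem:nz} is specific to the identity injection $e$ (it uses that $e$ is \emph{aligned} with $s,t$, so a column move on $s$ must be matched by the same column move on $t$); for a general $\sigma\in C_{(\gamma|\rho)}$ there is no such constraint, and terms with $\mathrm{sgn}(\pi)\neq\mathrm{sgn}(\pi')$ do survive. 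Likewise $Q_{k,n}$ does not ``kill'' the terms with $\pi\neq\mathrm{id}$: every $\pi\in C_s$ contributes, and it contributes \emph{exactly the same thing} as $\pi=\mathrm{id}$. That is where your factor $|C_s|=(\mu^\top)!$ actually comes from, and it is inconsistent with the claim that only one $\pi$ survives.

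The step you are missing is the observation $C_s\le C_t$, which holds because $t$ was built by appending a horizontal strip to $s$ (so every column of $s$ sits inside a column of $t$ with the same entries in $[k]$). This lets you substitute $\pi'=\tau\pi$ with $\tau\in C_t$, so that $\mathrm{sgn}(\pi)\,\mathrm{sgn}(\pi')=\mathrm{sgn}(\tau)$. Then, using that $(\pi,\pi)\in\mathrm{diag}(S_k)\le K_{k,n}$ preserves the sphere $C_{(\gamma|\rho)}$, one checks that
\[
\bigl|\{\sigma\in C_{(\gamma|\rho)}:\{\pi s\},\{\tau\pi t\}\text{ covers }\sigma\}\bigr|
=\bigl|\{\sigma\in C_{(\gamma|\rho)}:\{s\},\{\tau t\}\text{ covers }\sigma\}\bigr|
\]
for every $\pi\in C_s$. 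Summing over $\pi$ produces the factor $|C_s|$ that cancels the normalization, leaving the single sum over $\tau\in C_t$. Once you replace your fixed-point/sign argument with this $C_s\le C_t$ reparametrization, the rest of your outline goes through verbatim.
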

\begin{proof}
	An argument similar to the proof of Lemma~\ref{lem:nz} shows that $Q_{k,n}f_{s,t} \neq 0$, hence $Q_{k,n}f_{s,t} = \omega^{(\mu \otimes \lambda)}$ by the lemma above. In particular, we have
	\[ \omega^{\mu \otimes \lambda}_{(\gamma|\rho)} =  \frac{1}{(\mu^\top)!|C_{(\gamma|\rho)}|} \sum_{\pi \in C_s, \pi' \in C_t} \text{sgn}(\pi)~\text{sgn}(\pi') |\{ \sigma \in C_{(\gamma|\rho)} : \{\pi s\}, \{\pi't\}\text{ covers } \sigma \}|. \]
	But note that $C_s \leq C_t$, which gives us
	\[ \omega^{\mu \otimes \lambda}_{(\gamma|\rho)} =  \frac{1}{(\mu^\top)!|C_{(\gamma|\rho)}|} \sum_{\pi \in C_s, \tau \pi  \in  C_t\pi } \text{sgn}(\tau) |\{ \sigma \in C_{(\gamma|\rho)} : \{\pi s\}, \{\tau \pi t\}\text{ covers } \sigma \}|. \]
	Since  $\{\pi s\}, \{\tau \pi t\}\text{ covers } \sigma$ if and only if  $\{s\}, \{\tau t\}\text{ covers } \sigma$, we may rewrite the above as
	\[ \omega^{\mu \otimes \lambda}_{(\gamma|\rho)} =  \frac{1}{|C_{(\gamma|\rho)}|} \sum_{\pi \in C_t} \text{sgn}(\pi) |\{ \sigma \in C_{(\gamma|\rho)} : \{s\}, \{\pi t\}\text{ covers } \sigma \}|. \]
	Rearranging completes the proof of the first part, and Proposition~\ref{prop:eig} proves the second part. 
\end{proof}

\noindent To give a quick demonstration of the formula's efficacy, let $\overline{\mu} := (\mu_1 + n -k,\mu_2,\cdots,\mu_{\ell(\mu)}) \vdash n$ for any $\mu \vdash k$, and consider the spherical function $\omega^{\mu \otimes \overline{\mu}}$. For any $(\gamma|\rho)$ such that $\rho$ has more than $\mu_1$ non-trivial paths, for each $\sigma \in C_{(\gamma|\rho)}$, there exist two cells $c_\sigma,c'_\sigma$ in the same column of $t$ that are not in the image of $\sigma$. Each of these involutions $(c_\sigma,c'_\sigma)$ are sign-reversing, showing that  $\omega^{\mu \otimes \overline{\mu}}_{(\gamma|\rho)} = 0$, which is hardly transparent from the projection formula.   
Indeed, the $(\mu \otimes \overline{\mu})$-spherical functions play a crucial role in~\cite{Ambainis,LR}, and our formula may allow one to improve the results of~\cite{LR}.


\section{Code bounds}
\label{sec:codes}

\subsection{Delsarte's linear programming bound}

Consider an association scheme $\mathcal{A}=\{A_0,A_1,\dots,A_d\}$ over $X$, as defined in Section~\ref{sec:assoc}. 

For a subset $Y \subseteq X$, its characteristic function $\bm{\phi}=\bm{\phi}_Y \in \{0,1\}^X$ is defined in the usual way as
\[
\bm{\phi}(x) = 
\begin{cases}
1 & \text{if}~ x \in Y;\\
0 & \text{otherwise}.
\end{cases}
\]
Assuming $Y \neq \emptyset$, its \emph{inner distribution vector} $\mathbf{a}=\mathbf{a}_Y=(a_0,a_1,\dots,a_d)$ has entries
\[
a_i = \frac{\bm{\phi}^\top A_i \bm{\phi}}{\bm{\phi}^\top  \bm{\phi}},
\]
representing the relative frequencies of $i$th associates among pairs of elements of $Y$.

The following observation is simple, yet has profound consequences.

\begin{thm}[Delsarte, \cite{Delsarte}]
\label{Delsarte-LP}
For $\emptyset \neq Y \subseteq X$, its inner distribution vector $\mathbf{a}$ satisfies
\[\mathbf{a} Q \ge \mathbf{0},\]
where $Q$ is the dual eigenmatrix.
\end{thm}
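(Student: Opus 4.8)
The plan is to pass from the combinatorial quantities $a_i$ to the spectral decomposition of the Bose--Mesner algebra and then exploit positive semidefiniteness. First I would introduce the \emph{primitive idempotents} $E_0,E_1,\dots,E_d$ of the Bose--Mesner algebra: since the scheme is symmetric, the associates $A_j$ are commuting real symmetric matrices, hence simultaneously orthogonally diagonalizable, and $E_i$ is the orthogonal projection onto the $i$th common eigenspace. These idempotents form a second basis of the algebra, are pairwise orthogonal and sum to $I$, and---crucially for the argument---each $E_i$ is a real symmetric idempotent and therefore positive semidefinite.

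Next I would record the change of basis between $\{A_j\}$ and $\{E_i\}$. By the definition of the character table, $A_j=\sum_{i=0}^d P_{i,j}E_i$, so the $E_i$ are recovered by inverting $P$; using $Q=vP^{-1}$ this reads
\[
E_i=\frac{1}{v}\sum_{j=0}^d Q_{j,i}\,A_j.
\]
One should double-check the transpose and index conventions here against the definitions of $P$ and $Q$ in Section~\ref{sec:assoc}, since this bookkeeping is the one spot where it is easiest to slip.

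The heart of the argument is then a two-way evaluation of $\bm{\phi}^\top E_i\bm{\phi}$. On one hand, positive semidefiniteness of $E_i$ gives $\bm{\phi}^\top E_i\bm{\phi}\ge 0$. On the other hand, substituting the expansion above together with $a_j=\bm{\phi}^\top A_j\bm{\phi}/\bm{\phi}^\top\bm{\phi}$ yields
\[
\bm{\phi}^\top E_i\bm{\phi}=\frac{1}{v}\sum_{j=0}^d Q_{j,i}\,\bm{\phi}^\top A_j\bm{\phi}=\frac{\bm{\phi}^\top\bm{\phi}}{v}\sum_{j=0}^d a_j Q_{j,i}=\frac{\bm{\phi}^\top\bm{\phi}}{v}\,(\mathbf{a}Q)_i.
\]
Since $Y\neq\emptyset$ forces $\bm{\phi}^\top\bm{\phi}=|Y|>0$, comparing the two expressions gives $(\mathbf{a}Q)_i\ge 0$ for every $i$, which is exactly $\mathbf{a}Q\ge\mathbf{0}$.

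I expect the only genuine obstacle to lie in the first paragraph, namely justifying the existence and positive semidefiniteness of the $E_i$. This rests on the standard structure theory of the Bose--Mesner algebra of a symmetric scheme (commuting symmetric matrices are simultaneously orthogonally diagonalizable, and a real symmetric idempotent is an orthogonal projection, hence positive semidefinite), which I would cite from \cite{Delsarte} or \cite{Godsil} rather than reprove. Everything after that point is routine linear algebra.
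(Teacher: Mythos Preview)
Your argument is correct and is precisely the standard proof of Delsarte's inequality via the primitive idempotents of the Bose--Mesner algebra. Note, however, that the paper does not supply its own proof of this statement: Theorem~\ref{Delsarte-LP} is simply quoted with attribution to \cite{Delsarte}, so there is nothing in the paper to compare against beyond the citation itself. Your write-up matches the original argument in Delsarte's thesis and the treatments in \cite{Godsil} and \cite{BannaiI84}; the index check you flag is handled correctly given the paper's convention that $P_{i,j}$ is the eigenvalue of $A_j$ on the $i$th eigenspace.
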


A linear programming bound for association schemes is carried by Theorem~\ref{Delsarte-LP} in the following sense.  For a set $D$ of associate indices, a $D$-\emph{code} is a subset $Y \subseteq X$ such that any two distinct elements of $Y$ are $i$th associates for some $i \in D$.  In other words, in terms of the distribution vector, $Y$ is a $D$-code if $a_i=0$ for $i \not\in D$.  Moreover, distribution vectors are normalized so that $a_0=1$. Consider the LP
\begin{align}
\label{the-lp}
\text{maximize} \hspace{5mm} & \sum_i a_i\\
\nonumber
\text{subject to} \hspace{5mm} & \mathbf{a} Q \ge \mathbf{0},~a_0 = 1 \text{ and } a_i =0 ~\text{for}~ i \in D.
\end{align}
It follows from Theorem~\ref{Delsarte-LP} and the above remarks that the cardinality of a $D$-code is upper bounded by the solution of \eqref{the-lp}.

Using \eqref{the-lp}, Tarnanen \cite{T} computed LP bounds on permutation codes for $n \le 10$ and various allowed distance sets.  This was extended by Bogaerts \cite{Bogaerts} to $n \le 14$.  Here, the character table of $S_n$ takes the role of $Q$, and the set $D$ is built to contain all $\lam \vdash n$ such that $n$ minus the number of ones in $\lam$ belong to the allowed set of Hamming distances.  For example, with $n=5$ and Hamming distances in $\{2,4,5\}$, we take $D$ to consist of the associate indices
\[(2,1,1,1),(2,2,1),(4,1),(3,2),(5).\]
With this in mind, we abuse notation and henceforth let $D$ simply denote our set of allowed Hamming distances.

For the injection scheme $\mathcal{A}_{k,n}$, we implemented \eqref{the-lp} for $3 \le k < n \le 15$,
with the exception of a few parameter pairs $(k,n)$ at the larger end of this triangle, which are presently out of reach.  Whereas character tables for the symmetric group, and even for $\mathcal{A}_{n-1,n}$ using \cite{Strahov}, can be computed recursively for moderately large $n$, the character table for general $k$ and $n$ is presently more challenging computationally, even with our combinatorial formula from Section 4. On the other hand, the LP itself is comparatively easy to solve (and check numerically), even for parameters near the upper end of our range.

In what follows, we let $M(n,k,D)$ denote the maximum size of a $D$-code in the injection scheme $\mathcal{A}_{k,n}$, where $D$ contains the allowed Hamming distances.  We briefly describe a few natural distance sets $D$ and discuss some related combinatorial problems.  Tables of LP bounds accompany these different categories of codes.

\subsection{Classical minimum distance codes}

For the purpose of detecting and correcting errors, the distance sets typically considered in coding theory model a minimum allowed distance; that is, one takes $D=\{d,d+1,\dots\}$ for some integer $d$. Following the notation used in \cite{CKL,Dukes,T}, we write $M(n,k,d)$ in place of $M(n,k,\{d,d+1,\dots\})$ for the maximum size of an injection code with minimum Hamming distance $d$.  Some basic observations and bounds on $M(n,k,d)$ can be found in \cite{Dukes}.  A basic recursive upper bound is as follows.

\begin{prop}[Singleton bound]
$M(n,k,d) \le n!/(n-k+d-1)! = |S_{k-d+1,n}|.$
\end{prop}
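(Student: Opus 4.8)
The plan is to establish this as the injection-code analogue of the classical Singleton bound, via a \emph{puncturing} (coordinate-deletion) recursion iterated down to a trivial base case. The target equality $n!/(n-k+d-1)! = |S_{k-d+1,n}|$ already signals the mechanism: deleting $d-1$ coordinates turns a length-$k$ code of minimum distance $d$ into a length-$(k-d+1)$ family of injections, and the count of all such injections is exactly $|S_{k-d+1,n}|$.

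First I would prove the one-step recursion $M(n,k,d) \le M(n,k-1,d-1)$ for $d \ge 2$. Let $\Gamma$ be an optimal injection code of length $k$ with minimum distance $d$, and let $\pi$ delete the last coordinate, sending $(j_1,\dots,j_k)$ to $(j_1,\dots,j_{k-1})$; restricting an injection to $[k-1]$ is again an injection, so $\pi$ maps into $S_{k-1,n}$. The two key observations are: (i) $\pi$ is injective on $\Gamma$, since two distinct codewords agreeing on the first $k-1$ coordinates would differ only in the last, giving Hamming distance $1 < d$, a contradiction; and (ii) deleting a single coordinate lowers the Hamming distance by at most one, so $\pi(\Gamma)$ has minimum distance at least $d-1$. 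Hence $\pi(\Gamma)$ is a length-$(k-1)$ injection code of minimum distance $\ge d-1$, so $|\Gamma| = |\pi(\Gamma)| \le M(n,k-1,d-1)$.

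Next I would record the base case $M(n,k',1) = |S_{k',n}|$: any two distinct words differ in at least one position, so every family of injections is automatically a code of minimum distance $\ge 1$, and the largest such family is all of $S_{k',n}$. Iterating the recursion $d-1$ times then gives $M(n,k,d) \le M(n,k-1,d-1) \le \cdots \le M(n,k-d+1,1) = |S_{k-d+1,n}| = n!/(n-k+d-1)!$, as claimed.

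I expect the only delicate point to be the bookkeeping of the recursion—specifically, that injectivity of $\pi$ genuinely requires $d \ge 2$ (so that the recursion terminates precisely at distance $1$), and that the distance drops by at most, rather than exactly, one per step, so that the chain of inequalities is preserved. Everything else is routine. As an alternative I could give a one-shot argument: project each codeword onto its first $k-d+1$ coordinates and note this projection is injective on $\Gamma$, since two codewords agreeing there would differ in at most $d-1$ positions; then $|\Gamma|$ is bounded directly by the number $|S_{k-d+1,n}|$ of length-$(k-d+1)$ injections. The recursive formulation is preferable here, however, since the proposition is explicitly billed as a recursive bound.
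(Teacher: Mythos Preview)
Your argument is correct. The paper does not actually supply a proof of this proposition; it is stated without proof, attributed to \cite{Dukes}, and described only as ``a basic recursive upper bound.'' Your puncturing recursion $M(n,k,d) \le M(n,k-1,d-1)$ iterated down to $M(n,k-d+1,1)=|S_{k-d+1,n}|$ is exactly the standard argument that phrase points to, and both your recursive version and your one-shot projection alternative are valid.
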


For $r>0$, let $b_r$ be the size of the (any) ball of radius $r$ in $S_{k,n}$.
In \cite{Dukes} $b_r$ is determined as
\[ \sum_{j=0}^{\lfloor r \rfloor} \binom{k}{j} \sum_{i=0}^j (-1)^i \binom{j}{i} \frac{(n-k+j-i)!}{(n-k)!}. \]
A standard argument then gives the sphere packing bound for injections.

\begin{prop}[Sphere packing bound]
$\displaystyle M(n, k, d) \leq \frac{|S_{k,n}|}{b_{(d-1)/2}}.$
\end{prop}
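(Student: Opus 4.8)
The plan is to run the classical sphere-packing (Hamming) argument, now in the metric space $(S_{k,n}, d_H)$, where $d_H$ denotes Hamming distance. First I would record the two structural facts that make the counting go through. The first is that $d_H$ is a genuine metric, so it obeys the triangle inequality. The second is that the action of $G_{k,n} = S_k \times S_n$ on $S_{k,n}$ is both transitive and distance-preserving (relabeling domain or codomain permutes the positions of disagreement without changing their number). Transitivity is exactly what guarantees that every ball of a given radius has the same cardinality, which is why $b_r$ is well-defined as the size of \emph{any} radius-$r$ ball; this is already built into the sentence preceding the proposition.

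Next, let $\Gamma \subseteq S_{k,n}$ be any injection code with minimum Hamming distance $d$, so that $d_H(\sigma,\tau) \ge d$ for all distinct $\sigma,\tau \in \Gamma$. Set $r = (d-1)/2$ and, for each codeword $\sigma \in \Gamma$, let $B(\sigma)$ be the closed ball of radius $r$ about $\sigma$. The heart of the argument is that these balls are pairwise disjoint: if some injection $\xi$ were to lie in $B(\sigma) \cap B(\tau)$ for distinct $\sigma,\tau \in \Gamma$, the triangle inequality would give
\[ d \le d_H(\sigma,\tau) \le d_H(\sigma,\xi) + d_H(\xi,\tau) \le 2r = d-1, \]
a contradiction. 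Since Hamming distances are integers, $B(\sigma)$ coincides with the ball of radius $\lfloor r \rfloor$, which has cardinality $b_r$ by the formula preceding the proposition.

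Finally I would conclude by a volume count. The $|\Gamma|$ balls $\{B(\sigma)\}_{\sigma \in \Gamma}$ are pairwise disjoint subsets of $S_{k,n}$, each of size $b_r$, so
\[ |\Gamma| \cdot b_{(d-1)/2} = \Big| \bigcup_{\sigma \in \Gamma} B(\sigma) \Big| \le |S_{k,n}|. \]
Rearranging and taking the maximum over all such codes $\Gamma$ yields $M(n,k,d) \le |S_{k,n}|/b_{(d-1)/2}$, as claimed. There is no serious obstacle here: once the triangle inequality and the constancy of ball sizes are in hand, the argument is entirely elementary. The only point needing a moment's care is the parity bookkeeping when $d$ is even, where $r$ is a half-integer; but because $d_H$ takes integer values the ball of radius $r$ is literally the ball of radius $\lfloor r \rfloor$, so the bound $d_H(\sigma,\xi) + d_H(\xi,\tau) \le 2r = d-1 < d$ holds in both parities and no case split is needed.
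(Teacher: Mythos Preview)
Your argument is correct and is precisely the ``standard argument'' the paper invokes without spelling out: disjointness of radius-$(d-1)/2$ balls via the triangle inequality, uniform ball size via transitivity of $G_{k,n}$, then a volume count. There is nothing to add or compare.
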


Additional bounds on $M(n,k,d)$ are motivated by interest in the permutation code case, both for applications to powerline communication \cite{CKL,Huc} and as an extremal problem of independent combinatorial interest.  Indeed, equality in the Singleton bound is equivalent to existence of an ordered design; see  \cite[Section VI.38]{hcd} and \cite{Dukes} for more information.

\begin{table}[ttttp]
\begin{tabular}{cccr}
$n$ & $k$ & $d$ & $M \le$ \\
\hline
7 & 6 & 4 & 199\\
8 & 6 & 3 & 1513\\
 & 7 & 4 & 1462\\
9 & 7 & 4 & 2846\\
 & 8 & 4 & 12096\\
 &  & 5 & 2417\\
10 & 7 & 3 & 27308\\
 & 8 & 4 & 26206\\
 &  & 5 & 5039\\
 & 9 & 4 & 92418\\
 &  & 5 & 19158\\
 &  & 6 & 4991\\ 
11 & 8 & 4 & 52646\\
\\
\end{tabular}
\hspace{1cm}
\begin{tabular}{cccr}
$n$ & $k$ & $d$ & $M \le$ \\
\hline
11 & 9 & 4 & 256682\\
 &  & 5 & 47073\\
 & 10 & 4 & 936332\\
 &  & 5 & 185560\\
 &  & 6 & 42068\\
12 & 8 & 3 & 602579\\
 & 9 & 4 & 584327\\
 & 10 & 4 & 2699260\\
 &  & 5 & 471981\\
 & 11 & 4 & 10241521\\
 &  & 5 & 1922527\\
 &  & 6 & 411090\\ 
 13 & 9 & 4 & 1185053\\
\\ 
\end{tabular}
\hspace{1cm}
\begin{tabular}{cccr}
$n$ & $k$ & $d$ & $M \le$ \\
\hline
13 & 12 & 4 & 123235550\\
 &  & 5 & 23347599\\
 &  & 6 & 4687470\\
 &  & 7 & 910371\\
14 & 13 & 4 & 1621775700\\
 &  & 5 & 309490273\\
 &  & 6 & 58903464\\
 &  & 7 & 10510496\\
 &  & 8 & 2117618\\
15 & 14 & 4 & 23358981663\\
 &  & 5 & 4130012797\\
 &  & 6 & 804830167\\
 &  & 7 & 138132435\\
 &  & 8 & 24260981
\end{tabular}
\caption{Upper bounds on $M(n, k, d)$ via linear programming.}
\label{mindist}
\end{table}

The case $d=k-1$ has special significance for its connection with latin squares.  Colbourn, Kl\o ve and Ling \cite{CKL} showed that the existence of $r$ mutually orthogonal latin squares of order $n$ imply a permutation code of length $n$ and minimum distance $n-1$ of size $rn$.  Here, the code permutations correspond to the $n$ level sets occurring among each of the $r$ squares.  With this same construction, it is easy to see that the existence of $r$ mutually orthogonal $k \times n$ latin rectangles implies $M(n,k,k-1) \ge rn$.  It follows that an upper bound on $M(n,k,k-1)$ induces an upper bound on the number of mutually orthogonal $k \times n$ latin rectangles.

\begin{ex}
There exist a set of four mutually orthogonal incomplete $6 \times 6$ latin squares missing a common $2 \times 2$ subsquare.  This implies the existence of four $4 \times 6$ latin rectangles which are mutually orthogonal in the sense that superimposing any two produces no repeated pairs.  It follows that $M(6,4,3) \ge 24$; in fact, $M(6,4,3)=27$ was shown in \cite{Dukes}.
\end{ex}

Table~\ref{mindist} presents those upper bounds on $M(n,k,d)$ we found which improve the Singleton bound and the sphere packing Bound for $k < n \le 12$, all $k \leq 10$ with $n \leq 15$, $k=n-1 < 15$, and $(n, k) \in \{ (13, 9), (14, 9) \}$. There is no entry for $(n, k) = (14, 9)$ as the LP bound is never better than one of the two trivial bounds.

While building and verifying our table of code bounds, we noticed that Bogaerts' claim in \cite{Bogaerts} of there being no improvements via \eqref{the-lp} to the Singleton bound for permutation codes ($k=n$) with $14 \le n \le 16$ is incorrect. The correct values can be found in Table \ref{mindist_perm}.

\begin{table}[htbp]
\begin{tabular}{ccr}
$n$  & $d$ & $M(n,d) \le$ \\
\hline
11 & 4 & \textit{3326400}\\
 & 6 & 158617\\
 & 7 & 36718\\
12 & 4 & \textit{39916800}\\
 & 5 & \textbf{6141046}\\
 & 6 & \textbf{1766160}\\
 & 7 & 361395\\
13 & 4 & 411555972\\
 & 5 & 75789397\\
 & 6 & \textbf{21621600}\\
 & 7 & \textbf{4163390}\\
 & 8 & 879527\\
\end{tabular}
\hspace{1cm}
\begin{tabular}{ccr}
$n$ & $d$ & $M(n,d) \le$ \\
\hline
14 & 4 & 5298680543\\
 & 5 & 918752861\\
 & 6 & 255869198\\
 & 7 & 53744475\\
 & 8 & 9901953\\
 & 9 & 2083046\\
15 & 4 & 78702624000\\
 & 5 & 12053059200\\
 & 6 & 3511921683\\
 & 7 & 773606486\\
 & 8 & 130245681\\
 & 9 & 23627561\\
\end{tabular}
\caption{LP upper bounds on permutation codes with minimum distance $d$. Bounds which are obtained in \cite{Bogaerts}, but not due the LP bound, are cursive. Bounds which are correctly the LP bound in \cite{Bogaerts} are bold.}
\label{mindist_perm}
\end{table}

\subsection{Equidistant codes and general distance sets}

An \emph{equidistant permutation array}, or EPA$(n,d)$ is a subset $\Gamma \subseteq S_n$ with the property that any two distinct elements have Hamming distance exactly $d$.  In other words, an EPA$(n,d)$ is a $\{d\}$-code in (the conjugacy scheme of) $S_n$.  The problem of determining bounds on these objects dates back to the 1970s, beginning with a question of Bolton in \cite{Bolton}.  A concise survey on equidistant permutation arrays can be found in \cite[Section VI.44.5]{hcd}.  To our knowledge, the more general problem of equidistant injection codes has not been considered.  However, Huczynska \cite{Huc} has considered equidistant families for the `constant composition' variation in which permutations are replaced by codewords having every element in $[n]$ occurring equally often.

As a particular case, distance $k/2$ for injections of even length $k$ may be especially interesting for possible connections with Hadamard matrices.

Table~\ref{equidistant} presents various upper bounds on $M(n,k,\{d\})$ for $2 \le d < k < n \le 10$, that is, upper bounds on the size of a family of  injections in $S_{k,n}$ at pairwise distance exactly $d$.

\begin{table}[ttt]
\begin{tabular}{cccrr}
$n$ & $k$ & $d$ & $M \le$ & Triv \\
\hline
5 & 3 & 2 & 5 & 6\\
 & 4 & 3 & 6 & 7\\
\hline
7 & 3 & 2 & 8 & 9\\
 & 4 & 2 & 9 & 10\\
 & 5 & 3 & 14 & 15\\
 & 6 & 3 & 13 & 14\\
 &  & 4 & 26 & 30\\
\hline
8 & 6 & 3 & 20 & 21\\
 &  & 4 & 37 & 40\\
 & 7 & 4 & 30 & 32\\
 &  & 5 & 45 & 52\\
\hline
9 & 6 & 3 & 28 & 29\\
 &  & 4 & 42 & 45\\
 & 7 & 3 & 26 & 30\\
 & 8 & 3 & 20 & 21\\
 &  & 4 & 40 & 44\\
 &  & 6 & 59 & 72\\
\hline
10 & 4 & 3 & 18 & 19\\
 & 6 & 3 & 35 & 36\\
 &  & 4 & 47 & 49\\
 & 7 & 3 & 40 & 42\\
 &  & 4 & 77 & 78\\
 &  & 5 & 83 & 87\\ \\
\end{tabular}
\hspace{1cm}
\begin{tabular}{cccrr}
$n$ & $k$ & $d$ & $M \le$ & Triv \\
\hline
10 & 8 & 3 & 30 & 33\\
 &  & 6 & 92 & 107\\
 & 9 & 3 & 22 & 23\\
 &  & 4 & 47 & 58\\
 &  & 5 & 92 & 95\\
 &  & 7 & 75 & 96\\
\hline
11 & 4 & 3 & 20 & 21\\
 & 6 & 3 & 42 & 43\\
 &  & 4 & 53 & 54\\
 & 7 & 3 & 49 & 52\\
 &  & 4 & 94 & 100\\
 &  & 5 & 87 & 93\\
 & 8 & 3 & 48 & 52\\
 &  & 6 & 142 & 143\\
 & 9 & 3 & 33 & 38\\
 &  & 6 & 117 & 119\\
 &  & 7 & 108 & 141\\
 &  & 8 & 60 & 61\\
 & 10 & 4 & 52 & 67\\
 &  & 5 & 108 & 132\\
 &  & 6 & 187 & 189\\
 &  & 8 & 93 & 123\\ \\ \\
\end{tabular}
\hspace{1cm}
\begin{tabular}{cccrr}
$n$ & $k$ & $d$ & $M \le$ & Triv \\
\hline
12 & 5 & 2 & 23 & 24\\
 & 6 & 2 & 26 & 27\\
 &  & 3 & 49 & 50\\
 & 7 & 3 & 60 & 61\\
 &  & 4 & 104 & 114\\
 &  & 5 & 92 & 96\\
 & 8 & 3 & 66 & 67\\
 &  & 6 & 146 & 156\\
 & 9 & 3 & 53 & 57\\
 &  & 4 & 115 & 116\\
 &  & 7 & 166 & 191\\
 & 10 & 3 & 38 & 43\\
 &  & 4 & 114 & 119\\
 &  & 5 & 199 & 201\\
 &  & 6 & 212 & 214\\
 &  & 7 & 166 & 168\\
 &  & 8 & 126 & 180\\
 &  & 9 & 76 & 77\\
 & 11 & 3 & 28 & 29\\
 &  & 4 & 56 & 75\\
 &  & 5 & 120 & 150\\
 &  & 6 & 393 & 394\\
 &  & 7 & 317 & 324\\
 &  & 9 & 112 & 153\\
\end{tabular}
\caption{Upper bounds on $M=M(n, k, \{ d \})$ for equidistant injection codes. The column with heading `Triv' contains a trivial upper bound given by \eqref{clique-coclique}; that is,  the floor of $|S_{k,n}|$ divided by the LP bound for $M(n,k, \{d \}^c)$.}
\label{equidistant}
\end{table}

Studying various other sets of allowed distances is natural in many situations.  Delsarte showed \cite{Delsarte} that the solution $M_{LP}(D)$ to \eqref{the-lp} satisfies a clique-coclique bound 
\begin{equation}
\label{clique-coclique}
M_{\mathrm{LP}}(D) M_{\mathrm{LP}}(D^c) \le |X|,
\end{equation}
where $D^c$ denotes the complement of $D$.  In particular, it follows from \eqref{clique-coclique} that code bounds in the minimum distance case can be obtained from applying the LP \eqref{the-lp} to the pairwise \emph{maximal} distance case.  As an example, Tarnanen \cite{T} and Dukes and Sawchuk \cite{DS} compute LP bounds for some small allowed distances, such as $D=\{2,3\}$, which hold for general $n$.

Even dropping the condition that allowed distances form an interval is not without some precedent in other contexts: set systems with intersection conditions modulo a prime, arcs in finite geometries, or the independence number of relation graphs in an association scheme.

We offer a (contrived) example for injections with a distance set which is not an interval.  Recall that in `eventown', with population $N$, there is a family $\mathscr{C}$ of clubs with the property that $|C \cap C'|$ is even for any $C,C' \in \mathscr{C}$.   Berlekamp \cite{Berlekamp} showed that the number of clubs satisfies $|\mathscr{C}| \le 2^{\lfloor N/2 \rfloor}$.

{\bf Problem}.  The $N$ citizens of eventown are electing a mayor from a selection of $n$ candidates.  Each ballot consists of a ranked list of $k$ of the candidates.  Is it possible for every two ballots to agree in an even number of places?  The answer is yes if $N \le M(n,k,D)$, where $D=\{2,4,6,\dots\}$.

\begin{table}[htbp]
\begin{tabular}{cccrr}
$n$ & $k$ & $D$ & $M \leq$ & Triv \\
\hline
5 & 3 & \{1,3\} & 10 & 12\\
 & 4 & \{1,3\} & 9 & 10\\
 &  & \{2,4\} & 12 & 13\\
 &  & \{1,2,4\} & 17 & 20\\
 &  & \{1,3,4\} & 29 & 30\\
\hline
6 & 3 & \{1,3\} & 16 & 17\\
 & 5 & \{1,4\} & 23 & 26\\
 &  & \{2,4\} & 17 & 19\\
 &  & \{2,5\} & 22 & 23\\
 &  & \{3,4\} & 20 & 21\\
 &  & \{3,5\} & 25 & 26\\
 &  & \{1,2,4\} & 27 & 28\\
 &  & \{1,3,4\} & 31 & 32\\
 &  & \{1,2,5\} & 34 & 36\\
 &  & \{1,3,5\} & 37 & 42\\
 &  & \{1,4,5\} & 56 & 60\\
 &  & \{2,3,5\} & 27 & 31\\
 &  & \{2,4,5\} & 52 & 55\\
 &  & \{1,2,3,5\} & 53 & 55\\
 &  & \{1,2,4,5\} & 68 & 72\\
\hline
7 & 3 & \{1,3\} & 23 & 26\\
 & 4 & \{1,3\} & 22 & 23\\
 &  & \{1,4\} & 24 & 25\\
 &  & \{2,3\} & 33 & 35\\
 &  & \{2,4\} & 36 & 38\\
 &  & \{1,2,4\} & 72 & 76\\
 &  & \{1,3,4\} & 84 & 93\\
 & 5 & \{1,4\} & 39 & 40\\
 &  & \{1,5\} & 19 & 20\\
 &  & \{2,4\} & 34 & 36\\
 &  & \{2,5\} & 45 & 47\\
 &  & \{3,4\} & 35 & 36\\
 &  & \{3,5\} & 38 & 40\\
 &  & \{1,2,4\} & 62 & 66\\
 &  & \{1,3,4\} & 53 & 56\\
 &  & \{1,2,5\} & 69 & 72\\
 &  & \{1,3,5\} & 69 & 74\\
 &  & \{1,4,5\} & 111 & 114\\
 &  & \{2,3,4\} & 125 & 132\\
 &  & \{2,3,5\} & 63 & 64\\
 &  & \{2,4,5\} & 116 & 120\\
 &  & \{1,2,3,5\} & 158 & 168\\
 &  & \{1,2,4,5\} & 168 & 180\\
\end{tabular}
\hspace{1cm}
\begin{tabular}{cccrr}
$n$ & $k$ & $D$ & $M \leq$ & Triv \\
\hline
 & 6 & \{1,3\} & 15 & 18\\
 &  & \{1,4\} & 37 & 46\\
 &  & \{1,5\} & 40 & 42\\
 &  & \{2,4\} & 28 & 35\\
 &  & \{2,5\} & 41 & 42\\
 &  & \{3,4\} & 34 & 36\\
 &  & \{3,5\} & 41 & 43\\
 &  & \{3,6\} & 47 & 48\\
 &  & \{4,5\} & 31 & 34\\
 &  & \{4,6\} & 64 & 70\\
 &  & \{1,2,4\} & 37 & 46\\
 &  & \{1,3,4\} & 52 & 60\\
 &  & \{1,2,5\} & 71 & 78\\
 &  & \{1,3,5\} & 58 & 60\\
 &  & \{1,4,5\} & 58 & 61\\
 &  & \{1,2,6\} & 40 & 41\\
 &  & \{1,3,6\} & 74 & 75\\
 &  & \{1,4,6\} & 110 & 120\\
 &  & \{2,3,5\} & 42 & 45\\
 &  & \{2,4,5\} & 67 & 68\\
 &  & \{2,3,6\} & 82 & 86\\
 &  & \{2,4,6\} & 83 & 86\\
 &  & \{2,5,6\} & 84 & 96\\
 &  & \{3,4,5\} & 122 & 126\\
 &  & \{3,4,6\} & 64 & 70\\
 &  & \{3,5,6\} & 108 & 136\\
 &  & \{1,2,3,5\} & 72 & 78\\
 &  & \{1,2,4,5\} & 105 & 107\\
 &  & \{1,3,4,5\} & 151 & 152\\
 &  & \{1,2,3,6\} & 146 & 162\\
 &  & \{1,2,4,6\} & 117 & 122\\
 &  & \{1,3,4,6\} & 118 & 122\\
 &  & \{1,2,5,6\} & 140 & 148\\
 &  & \{1,3,5,6\} & 141 & 180\\
 &  & \{2,3,4,6\} & 118 & 126\\
 &  & \{2,3,5,6\} & 108 & 136\\
 &  & \{2,4,5,6\} & 280 & 336\\
 &  & \{1,2,3,4,6\} & 233 & 240\\
 &  & \{1,2,3,5,6\} & 168 & 193\\
 &  & \{1,2,4,5,6\} & 360 & 387\\
 \\ \\ \\
\end{tabular}
\caption{Various upper bounds on $M=M(n,k,D)$ for distance sets $D$. 
 The column with heading `Triv' contains a trivial upper bound given by \eqref{clique-coclique}; that is, the floor of $|S_{k,n}|$ divided by the LP bound for $M(n,k, D^c)$.}
\label{variousD}
\end{table}

In Table~\ref{variousD}, we present a sample of upper bounds found on $M(n,k,D)$ for small $n,k$ and sets of distances $D \subseteq [k]$. We only list entries for which Equation \eqref{clique-coclique} is not satisfies with equality, that is
\[ M_{\mathrm{LP}}(D) M_{\mathrm{LP}}(D^c) < |X|. \]
Recently, Aljohani, Bamberg, and Cameron defined the concepts of \textit{synchronizing} and \textit{separating} for association schemes \cite{ABC}. In our language, the injection scheme is \text{non-separating} if
\[ M(n, k, D) M(n, k, D^c) = |X| \]
for one non-trivial $D$. Table~\ref{variousD} implies that the injection scheme
is non-separating for $(n, k) \in \{ (5, 3), (6, 3), (7, 3) \}$ if we limit ourselves to distance-sets and not all possible graphs.

\section{Future Work and Open Questions}

\subsection{Representation Theory}

Our main question is to what extent the representation theory of the symmetric group (i.e., the Gelfand pair $(S_n \times S_n, \text{diag}(S_n))$) carries over to the Gelfand pair $(G_{k,n},K_{k,n})$. Indeed, we believe there are stronger connections to the representation theory of the symmetric group yet to be shown. 

For example, following~\cite[I.7]{MacDonald95} and letting 
\[C' := \bigoplus_{k,n~:~k\leq n} \mathbb{C}[K_{k,n} \backslash G_{k,n} / K_{k,n}],\]
one can define a natural bilinear multiplication on $C'$ so that it is a commutative and associative graded $\mathbb{C}$-algebra. Classically, the characteristic map $\text{ch} : C \rightarrow \Lambda$ is an isometric isomorphism between the commutative and associative graded algebra $C$ generated by all irreducible characters of symmetric groups and the ring of symmetric functions $\Lambda$. 
It would be particularly interesting to find an analogous characteristic map $\text{ch}':C' \rightarrow \Lambda'$ to a suitable polynomial ring $\Lambda'$ such that its vector space $(\Lambda')^k$ of degree-$k$ polynomials has dimension equal to the number of cycle-types of $S_{k,n}$. 

Finally, we suspect there are other Lie and $q$-analogues of $(G_{k,n},K_{k,n})$ that might be worth investigating, which would likely require different techniques than the ones presented here.

\subsection{Coding Theory}

It is of interest to determine when $M(n,k,D)$ can achieve its LP upper bound.  For $D=\{d,d+1,\dots,n\}$, a few constructions
can be found in \cite{Dukes}, but essentially nothing is known for other distance sets $D$.  In another direction, one can also investigate the behavior of the LP bound itself. A look at Table \ref{mindist} suggests that for fixed $k$, the LP bound is non-trivial only for a finite number of values of $n$.

As noted earlier, our data suggests that the injection scheme is usually separating except for maybe a few exceptional cases. It would be interesting to show this formally.


\end{document}